\newtheorem{theorem}{Theorem}[section]
\newtheorem{corollary}[theorem]{Corollary}
\newtheorem{lemma}[theorem]{Lemma}
\newtheorem{conjecture}[theorem]{Conjecture}
\newtheorem{proposition}[theorem]{Proposition}
\newtheorem{definition}[theorem]{Definition}
\newtheorem*{remark}{Remark}
\newtheorem{problem}{Problem}
\newtheorem{ithm}{Theorem}[section]
\newtheorem{icor}[ithm]{Corollary}
\DeclareMathOperator{\Ricci}{Ric}
\def\cal#1{\mathcal{#1}}
\def\bb#1{\mathbb{#1}}
\def\lie#1{\mathfrak{#1}}
\newcommand{\h}{\frac{1}{2}}
\DeclareMathOperator{\id}{id}
\DeclareMathOperator{\Hess}{Hess}
\DeclareMathOperator{\tr}{tr}
\DeclareMathOperator{\scal}{scal}
\title{Positive Ricci curvature through Cheeger deformations}
\author{Leonardo F. Cavenaghi}
\address{Instituto de Matemática, Estatística e Computação Cinetífica -- Unicamp, Rua Sérgio Buarque de Holanda, 651, 13083-859, Campinas, SP, Brazil}
\email{leonardofcavenaghi@gmail.com}
\author{Renato J.M. e Silva}
\address{Instituto de Matem\'atica e Estat\'istica e Computaç\~ao -- UNICAMP, Rua S\'ergio Buarque de Holanda, 651,13083-97 Campinas, SP, Brazil}
\email{renatojuniorms@gmail.com}
\author{Llohann D. Sperança}
\address{Instituto de Ci\^encia e Tecnologia -- Unifesp, Avenida Cesare Mansueto Giulio Lattes, 1201, 12247-014, S\~ao Jos\'e dos Campos, SP, Brazil}
\email{lsperanca@gmail.com}
\begin{document}
  \begin{abstract}
    This paper is devoted to a deep analysis of the process known as Cheeger deformation, applied to manifolds with isometric group actions. Here, we provide new curvature estimates near singular orbits and present several applications. 
    As the main result, we answer a question raised by a seminal result of Searle--Wilhelm about lifting positive Ricci curvature from the quotient of an isometric action. To answer this question, we develop techniques that can be used to provide a substantially streamlined version of a classical result of Lawson and Yau, generalize a curvature condition of Chavéz, Derdzinski, and Rigas, as well as, give an alternative proof of a
result of Grove and Ziller.
  \end{abstract}

  \maketitle
   
  \section{Introduction}
  An interesting and challenging problem in Riemannian geometry is that of producing examples of metrics of positive (Ricci or sectional) curvatures. The current scarcity of known examples of metrics with positive sectional curvature compared to the known examples of metrics with non-negative sectional curvature illustrates the difficulty of the problem for the sectional curvature.
   
  Of particular interest is the search for positively curved metrics on \textit{exotic manifolds}. Gromoll and Meyer \cite{gromoll1974exotic} constructed the first exotic sphere with a metric of non-negative sectional curvature; Wilhelm \cite{wilhelm-lots} constructed metrics of positive Ricci curvature and almost non-negative sectional curvature in every exotic sphere which is a 3-sphere bundle over the 4-sphere; Grove and Ziller \cite{gz} produced metrics of nonnegative sectional curvature on these examples and Goette, Kerin and Shankar \cite{shankarannals} extended Grove--Ziller's result to all 7-spheres. Apart from spheres, Grove, Verdiani and Ziller \cite{grove2011exotic}, and independently Dearricott \cite{dearricott20117}, built an exotic unit tangent space with positive sectional curvature. 
   
  Concerning non-trivial examples of manifolds with metrics of positive Ricci curvature, Nash \cite{nash1979positive}, Poor \cite{poor1975some}, Searle and Wilhelm \cite{searle2015lift}, Wraith \cite{wraith1997,wraith2007new}, Joachim \cite{jowr} and Crowley and Nordstr\"om and Crowley and Wraith \cite{crowley2015new, crowley2017positive} proved the existence of such metrics on certain bundles and exotic manifolds. In \cite{SperancaCavenaghiPublished, cavenaghi2019positive} the first and third named authors built metrics of positive Ricci curvature on several exotic manifolds and on the total space of bundles where the fibers and/or the base spaces are exotic manifolds, or even when the base manifold is a so-called Shrinking Ricci Soliton (\cite{SperancaCavenaghiPublished, cavenaghi2019positive} are particularly related to the ones of Searle and Wilhelm, Nash and Poor). With different aims, the works of Gilkey, Park and Tuschm\"ann \cite{Gilkey1998} and Belegradek and Wei \cite{wei} are also interesting references for positive Ricci curvature on bundles. 
   
  Riemannian submersions play an important role in the construction of manifolds with positive curvature properties due to the O'Neill submersion formula, which implies that Riemannian submersions do not decrease sectional curvature. However, it is not necessarily true that Riemannian submersions preserve positive Ricci curvature (see \cite{pro2014riemannian}).
   
  On the other hand, one may ask whether one can lift positive curvature from the base of a Riemannian submersion to its total space. There is an easy counterexample to this question: simply consider the projection onto the first coordinate $\mathbb{R}P^2\times\mathbb{R}P^2\to \bb RP^2$. The base of this submersion has positive sectional curvature, however \textit{Synge's Theorem} implies that the positive sectional curvature cannot be lifted to the total space since $\pi_1(\mathbb{R}P^2\times\mathbb{R}P^2)\cong\mathbb{Z}_2\times \mathbb{Z}_2$.
   
  A next natural question 
  is \textit{can one lift positive Ricci curvature from the orbit space?} Searle and Wilhelm \cite{searle2015lift} answered this question in affirmative for a large class of submersions.
   
  \begin{theorem}[Searle--Wilhelm, \cite{searle2015lift}]\label{ithm:searleintro}
    Let $(M,g)$ be a compact Riemannian manifold endowed with a $G$-action satisfying
    \begin{enumerate}[$(SW1)$]
      \item $G$ is a compact connected Lie group acting effectively and by isometries;
      \item A $G$-principal orbit has finite fundamental group;
      \item $\Ricci_{M/G}\geq 1$ in the orbital distance metric.
    \end{enumerate}
    Then $M$ carries a $G$-invariant metric of positive Ricci curvature.
  \end{theorem}

  The main idea in the proof of Theorem \ref{ithm:searleintro} is to perform a conformal change on the metric $g$ followed by a standard deformation, commonly used for constructing metrics of nonnegative/positive sectional curvature, called \textit{Cheeger deformation} (see \cite{cheeger,Muter} for instance). 
  Based on the proof's delicate estimates and the lack of examples, the question was raised: \textit{is it possible to prove Theorem \ref{ithm:searleintro} using only Cheeger deformations?}

  Motivated by the two questions above, we provide a deep analysis of the behavior of the Cheeger deformation near singular orbits. As a consequence, we answer in negative the second question (Theorem \ref{ithm:mainitro}); simplify the proof of a result in \cite{Grove2002} (Theorem \ref{ithm:GZ}); recover the celebrated result on the existence of metrics of positive scalar curvature under non-Abelian symmetry assumptions \cite{lawson-yau} (Theorem \ref{ithm:LW}); and improve the condition for positive sectional curvature in \cite{CDR} (Theorem \ref{ithm:CDR}).
   
  As the first result, we show that it is not always possible to lift positive Ricci curvature from the quotient only by using Cheeger deformation. Or, equivalently, we show that the conformal change in the proof of Theorem \ref{ithm:searleintro} was applied in an essential way
  (see the family of examples presented in Section \ref{sec:examples} for details):
  \begin{ithm}\label{ithm:mainitro}
    There are Riemannian manifolds satisfying the hypotheses of Theorem \ref{ithm:searleintro} that do not develop positive Ricci curvature after any Cheeger deformation.
  \end{ithm}

  Theorem \ref{ithm:mainitro} is proved through a fully algebraic characterization of sufficient conditions for a manifold to admit a $G$-invariant metric that does not develop positive Ricci curvature after Cheeger deformation. Such a characterization is obtained by reducing the study to tangent vectors at singular orbits, that are fixed by the isotropy representation, which we call \textit{fixed axes}. Afterwards, we recognize these algebraic conditions in terms of geometric obstructions. This is the content of Theorem \ref{thm:technical}, which has Proposition \ref{prop:trace.to.Ricci} as a local converse.
   
Related to what we just mentioned, given $p\in M$, denote by $\rho \colon G_p\to O(\cal H_p)$ the restriction of the isotropy representation at $p$ to $\cal H_p=(T_pGp)^\perp$, and let $G^0_p$ denote the identity component of the isotropy subgroup $G_p$. We prove:
  \begin{ithm}\label{ithm:obstruction}
    Let $(M,g)$ and $G$ satisfy $(SW1)$-$(SW3)$.
    Then, if $g$ has directions with negative Ricci curvature after any finite Cheeger deformation, it follows that
    \begin{enumerate}[$1.$]
      \item there is a point $q$ in a singular orbit and a non-zero vector $X\in\cal H_q$ which is fixed by $\rho({G_q^0});$
      \item the restriction of $\rho$ to $\cal H_q\cap \mathrm{span}\{X\}^{\perp}$ is reducible.
    \end{enumerate}
  \end{ithm}

  In particular, we conclude some simple criteria for Cheeger deformations to allow the development of positive Ricci curvature under the hypotheses $(SW1)$-$(SW3)$:
   
  \begin{icor}\label{icor:sufficientconditions} Let $(M,g)$ and $G$ satisfying $(SW1)$-$(SW3)$. Then $g$ develops positive Ricci curvature after a finite Cheeger deformation if any of the following conditions hold:
    \begin{enumerate}[$(a)$]
      \item the singular strata are composed of isolated orbits;
      \item isotropy representations have no non-zero fixed points;
      \item the isotropy representation $\rho : G^0_q \to O(\cal H_q)$ is irreducible at every point in the singular strata.
    \end{enumerate} 
  \end{icor}
   
  Moreover, item $(a)$ can be viewed as a condition on the quotient $M/G$:
   
  \begin{icor}\label{icor:quotient}
    Suppose that $(M,g)$ and $G$ satisfy $(SW1)$-$(SW3)$. Then $g$ develops positive Ricci curvature after a finite Cheeger deformation if the singular strata of the quotient $M/G$ is 0-dimensional.
  \end{icor}

The analysis here employed is rich enough to furnish a better understanding of Cheeger deformations, making it possible to produce other positive curvatures, or at least furnishing some criteria for it. For instance, we recall that in \cite{lawson-yau}, Lawson and Yau construct a metric with positive scalar curvature on any compact manifold $M$ endowed with an action of a compact non-Abelian Lie group $G$. The proof considers a copy of $SU(2)\subseteq G$ (or $SO(3)\subseteq G$) and is broken in two parts: one first applies the \textit{canonical deformation} to the $S^3$ fibers (see \cite{besse1987einstein} for details) on the regular part; then delicate estimates are made near the singular strata, independent of the deformation parameter. A downside of the construction is the loss of symmetry. 
  By using Cheeger deformations, we maintain the original symmetry and reduce the problem near singular orbits to elementary estimates. More precisely, we re-prove:
   
  \begin{theorem}[Lawson--Yau]\label{ithm:LW}
    Let $(M,g)$ be a compact Riemannian manifold. Suppose that $G$ is a compact Lie group with non-Abelian Lie algebra that acts effectively on $M$ by isometries. Then $g$ develops positive scalar curvature after a finite Cheeger deformation.
  \end{theorem}
   
  In the realm of sectional curvature, we generalize the condition for positive sectional curvature in \cite{CDR}, by replacing the canonical deformation, which would only work for principal $S^3$- or $SO(3)$-bundles with totally geodesic fibers, with Cheeger deformations. An interesting property of the condition below is an almost complete decoupling of the three main ingredients that characterize the geometry of the regular part $M^{reg}$: the geometry of the fiber; the dynamics of the horizontal distribution; and the geometry of the base. This fact should be extremely useful in applying analysis methods to problems of existence of positive sectional curvature on principal bundles.
  
  To better understand the statement of Theorem \ref{ithm:CDR}, we recall that given a biinvariant inner product $Q$ on $\lie g$, for every point $p$ in a principal orbit, we denote by $\Omega:\cal H_p\times\cal H_p\to \lie m_p$ the \textit{curvature 2-form} of the bundle $M^{reg}\to M^{reg}/G$, where $M^{reg}$ stands for the open, dense and convex set where each two orbits are diffeomorphic to each other. More precisely, given two horizontal vector fields $X,Y$, we define $\Omega(X,Y)$ as the unique element in $\lie m_p$, where $\lie m_p$ is identified via isometric action vectors with the tangent space to the orbit through $p$, such that \[\Omega(X,Y)^*=-[X,Y]^{\mathcal V} = -2A_XY,\]
  where $A : \cal H_p\times \cal H_p \rightarrow \cal V_p$ stands to the so-called O'Neill tensor and the superscript $\ast$ refers to action vectors. Observe that it can also be equivalently characterized by:
  \[g(\Omega_X^*V^*,Y)=Q(\Omega(X,Y),V)=-2g(A^*_XP^{-1}V^*,Y),\]
  where $P$ is uniquely characterized by $g|_{\mathcal V_p}(\cdot, \cdot) = Q(P\cdot, \cdot)$. In particular, $-2A^*P^{-1}V^*=\Omega^*V$. 
   
  \begin{ithm}\label{ithm:CDR}
    Suppose that $G$ is compact and that $G/G_p$ has positive sectional curvature as a normal homogeneous manifold for every $p\in M^{reg}$. Then $\sec_{g_t}>0$ for any sufficiently large $t$ if, and only if, there is $k>0$ such that
    \begin{multline}\label{eq:CDR}
      (R_{M/G}(X,Y,Y,X)-k\|X\wedge Y\|^2_g)\\
      (\tfrac{1}{2}Q(\Hess P^{-1}(X)V,V) +\tfrac{1}{4}\|\Omega^*_XV\|_{g}^2-k\|X\|^2_gQ(P^{-1}V,V))\\\geq Q((\nabla_X\Omega)_XY,V)^2
    \end{multline}
    for every $X,Y\in \cal H_p$, $V\in\lie g$ and $p\in M^{reg}$.
  \end{ithm}
   
  By recalling that Cheeger deformation preserves positive sectional curvature, one concludes that the condition in Theorem \ref{ithm:CDR} is a necessary condition for a manifold to have positive sectional curvature (even before applying a Cheeger deformation).
   
  Finally, Theorem \ref{ithm:searleintro} demands positive Ricci curvature on the quotient. This is not possible if the quotient is an interval, as in the case of cohomogeneity-one actions. By studying the limit behavior of the Cheeger deformation, we also use our techniques in this direction, recovering a special case of a result of Grove and Ziller:
   
  \begin{theorem}[Grove--Ziller, \cite{Grove2002}]
    \label{ithm:GZ}
    Suppose that $M$ is a compact cohomogeneity one manifold such that
    \begin{enumerate}[(i)]
      \item $M$ has two singular orbits; 
      \item a principal orbit has a finite fundamental group.
    \end{enumerate}
    Then, there is an invariant metric $g$ on $M$ with positive Ricci curvature.
  \end{theorem}
   
  The paper is structured as follows: the definition, construction and basic facts about Cheeger deformations are gathered in Section \ref{sec:cdef}. The main estimates around singular orbits are in Section \ref{sec:estimates}. Section \ref{sec:itema} applies the theory so far to prove Theorems \ref{ithm:CDR}, \ref{ithm:GZ}, \ref{ithm:LW} and item $1$ of Theorem \ref{ithm:obstruction}. In Section \ref{sec:proofmainintro} we begin providing some algebraic description of the Ricci tensor on fixed axes, proving item $2$ of Theorem \ref{ithm:obstruction}. The examples that ensure Theorem \ref{ithm:mainitro} are described in Section \ref{sec:examples}. Finally, Section \ref{sec:proofmainintro} fully describes sufficient conditions, despite presenting some obstructions, to positive Ricci curvature be lifted from orbit spaces. Later on, the characterizations they provided are recognized as geometric data.

  \section{Cheeger deformations on $G$-manifolds}
  \label{sec:cdef}
  We follow \cite{Muter} and \cite{mutterz} to give a brief review on the procedure known as \textit{Cheeger deformations}. Specifically, we recall some results that we shall need in the rest of this work.  
   
  \subsection{Cheeger deformations and its associated tensors}
   
  Let $(M,g)$ be a Riemannian manifold endowed with an isometric action by a compact Lie group $G$ with a biinvariant Riemannian metric $Q$. We recall that, for each point $p \in M$, the map $g \mapsto gp$ induces a diffeomorphism of $G\big/G_p$ onto the orbit $Gp$, where $G_p$ is the isotropy subgroup at $p$. Moreover, the metric $Q$ induces an orthogonal decomposition $\mathfrak{g} = \mathfrak{g}_p\oplus\mathfrak{m}_p$, where $\mathfrak{g}_p$ stands for the Lie algebra of $G_p$ and $\mathfrak{m}_p$ is isomorphic to $T_pGp$ via action fields: 
  \begin{equation*}
    \lie g\ni U \mapsto U^*_p=\frac{d}{dt}\Big|_{t=0}e^{tU}p,
  \end{equation*}
  where $e^{tU}$ denotes the Lie group exponential map.
   
  We call the tangent space to the orbit at $p$ \emph{the vertical space at $p$} and denote $T_p Gp=\cal V_p$; the $g$-orthogonal complement of $\mathcal{V}_p$ in $T_pM$ is called the \emph{horizontal space at} $p$ and is denoted by $\mathcal{H}_p$. Thus, every tangent vector $\overline X \in T_pM$ can be uniquely written as $\overline X = X + U^{\ast}_p$, where $X\in \cal H_p$ and $U\in \lie m_p$. We omit the subscript $p$ in $U_p^*$ whenever there is no risk of ambiguity.
   
   The Cheeger deformation consists of a $1$-parameter family of $G$-invariant Riemannian metrics on $M$ produced by an appropriate shrinking of the metric $g$ in the orbit's direction. It preserves the horizontal space at each point together with its metric. This procedure promptly generalizes the classical \textit{Canonical Variation} (see \cite[Example 2.1.1, p. 56]{gw}) on $G$-principal bundles. 
   
To construct the family of metrics, consider the following free isometric $G$-action on $(M\times G,g+\tfrac{1}{t}Q)$:
  \begin{equation}\label{eq:actioncheeger}
    r(p,g) := (r p, rg),\quad\forall r\in G.
  \end{equation}
  The map $\bar\pi \colon (p,g) \mapsto g^{-1}p$ defines a diffeomorphism between the quotient and $M$, thus inducing a $1$-parameter family of Riemannian metrics $g_t$. One observes that
  $g_t(\cal H_p,\cal V_p)=0$ and that $g_t|_{\cal H_p} = g|_{\cal H_p}$ for every $p\in M.$
   
  \begin{definition}
    We call the resulting metric $g_t$ the \emph{Cheeger deformation} of $g$ (at time $t$).
  \end{definition}
   
  The metric $g_t$ can be completely described by the following tensors:
  \begin{definition}~ 
    \begin{itemize}
      \item The \emph{orbit tensor} at $p$ is the linear map $P : \mathfrak{m}_p \to \mathfrak{m}_p$ defined by
      \[g(U^{\ast},V^{\ast}) = Q(PU,V),\quad\forall U^{\ast}, V^{\ast} \in \mathcal{V}_p.\]
      \item The \emph{deformed orbit tensor} of $g_t$ at $p$ is defined to be the linear map $P_t:\lie m_p\to \lie m_p$ such that
      \[g_t(U^{\ast},V^{\ast}) = Q(P_tU,V), \quad\forall U^{\ast}, V^{\ast} \in \mathcal{V}_p.\]
      \item The \emph{metric tensor} of $g_t$ at $p$ is the linear map $C_t: T_pM\to T_pM$ satisfying
      \[g_t(\overline{X},\overline{Y}) = g(C_t\overline{X},\overline{Y}), \quad\forall \overline{X}, \overline{Y} \in T_pM.\]
    \end{itemize}
  \end{definition}
  All three tensors are symmetric, positive definite, and related in the following way:
   
  \begin{proposition}[Proposition 1.1, \cite{mutterz}] \label{propauxiliar}~ 
    \begin{enumerate}
      \item $P_t = (P^{-1} + t1)^{-1} = P(1 + tP)^{-1}$,
      \item Given $\overline{X} = X + U^{\ast}$ then $C_t(\overline{X}) = X + ((1 + tP)^{-1}U)^{\ast}$.
    \end{enumerate}
  \end{proposition}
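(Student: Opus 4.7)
The plan is to realize $g_t$ as the submersion metric induced by $\pi'\colon(M\times G,\,g\times\tfrac{1}{t}Q)\to M$, compute the horizontal lift of a vector $\overline Y\in T_pM$, and read off $P_t$ and $C_t$ from the resulting formula.

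I would first identify the vertical distribution of the free $G$-action $r\cdot(p,g)=(rp,gr^{-1})$: at $(p,e)$, differentiating along $V\in\lie g$ produces $(V^*_p,-V)$. Using the definition of $P$, a vector $(X+U^*,W)\in T_{(p,e)}(M\times G)$ with $X\in\cal H_p$ and $U\in\lie m_p$ is horizontal for $g\times\tfrac{1}{t}Q$ precisely when $W=tPU$. This is the one input that pins down everything else.

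Next, since $d\pi'_{(p,e)}(v,w)=v+w^*_p$, matching the projection of a horizontal vector $(X+U^*,tPU)$ with a prescribed $\overline Y=Y+V^*$ forces $X=Y$ and $(1+tP)U=V$. Plugging this lift into the product metric and using that $\pi'$ is a Riemannian submersion yields $g_t(\overline Y,\overline Y)=g(Y,Y)+Q(P(1+tP)U,U)$. Substituting $U=(1+tP)^{-1}V$ reads off $P_t=P(1+tP)^{-1}$ on $\lie m_p$; the alternative form $(P^{-1}+t\cdot 1)^{-1}$ follows because $P$ is invertible and commutes with $(1+tP)^{-1}$.

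Finally, polarizing the preceding identity and using that $\cal H_p$ and $\cal V_p$ are $g$-orthogonal gives, for $\overline X=X+U^*$, the expression $g_t(\overline X,\overline Y)=g(X,Y)+Q(P(1+tP)^{-1}U,V)$. This coincides with $g\bigl(X+((1+tP)^{-1}U)^*,\overline Y\bigr)$ by the definition of $P$, so by the defining relation of $C_t$ we obtain the second bullet. The main care in the argument is tracking the factors of $(1+tP)^{\pm1}$ and exploiting that $P$ and $(1+tP)^{-1}$ are $Q$-self-adjoint commuting operators on $\lie m_p$; once the horizontal distribution has been identified, the remaining steps are formal linear algebra.
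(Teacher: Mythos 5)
Your argument is correct: identifying the $\pi'$-horizontal space at $(p,e)$ as the vectors $(X+U^*,tPU)$ and matching $d\pi'$ of the lift with $Y+V^*$ is exactly the standard computation, and the algebra with $P(1+tP)^{-1}=(P^{-1}+t\,1)^{-1}$ and the polarization step are all sound. The paper itself gives no proof of this proposition, deferring to M\"uter and Ziller, and the proof found there is essentially the same horizontal-lift calculation you carried out.
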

  
  \begin{remark}
    Concerning our comment on the fact that Cheeger deformations can be seen as a $1$-parameter subgroup, just observe that: at least in the case of principal bundles, we can see the orbit tensor $P$ as a section of the bundle over $M$ in which the fibers are automorphisms of the Lie algebra $\lie g$ of $G$. In this manner, using the fact that $P$ is a symmetric tensor, one can check that the Cheeger deformed tensor $P_t$ corresponds to the flow associated to the following ODE:
    \begin{equation}
        \begin{cases}
            \frac{d}{dt}\psi(t) = -(\psi(t))^2,\\
            \psi(0) = P
        \end{cases}
    \end{equation}
  \end{remark}
   
  The metric tensor $C_t$ also plays a crucial role in the computation of the sectional curvature of $g_t.$ As initially observed by Cheeger and essential in the work of M\"uter (see \cite{Muter}), the expression of the sectional curvature of $g_t$ is much more natural when computed in the reparameterized plane $C_t^{-1}\overline{X}\wedge C_t^{-1}\overline{Y}$, instead of the original $\overline{X}\wedge \overline{Y}$. Specifically, it is better to consider the following quantity:
  \begin{equation}
    \kappa_t(\overline X,\overline Y) := R_{g_t}(C_t^{-1}\overline{X},C_t^{-1}\overline{Y},C_t^{-1}\overline{Y},C_t^{-1}\overline{X})
  \end{equation}
  where $R_{g_t}$ stands for the (4,1) Riemannian curvature tensor 
  \[R_{g_t}(X,Y,Z,W)=g_t(\nabla_X\nabla_YZ-\nabla_Y\nabla_XZ-\nabla_{[X,Y]}Z,W).\]
  In particular, one concludes that the reparameterized sectional curvature is non-decreasing in $t$:
  \begin{theorem}[Proposition 1.3, \cite{mutterz}]\label{thm:curvaturasec}
    Let $\overline{X} = X + U^{\ast},~ \overline{Y} = Y + V^{\ast}$ be tangent vectors. Then,
    \begin{equation}\label{eq:curvaturaseccional}
      \kappa_t(\overline X,\overline{Y}) = R_g(\overline{X},\overline{Y},\overline{Y},\overline{X}) +\frac{t^3}{4}\|[PU,PV]\|_Q^2 + z_t(\overline{X},\overline{Y}),
    \end{equation}
    where $z_t$ is bilinear in each entry, non-decreasing, and zero at $t=0$. Moreover, at points on the regular stratum, it can be written as
    \[ z_t(\overline X,\overline Y) = 3t\left\|(1+tP)^{-1/2}P\nabla^{\mathbf{v}}_{\overline X}\overline Y - (1+tP)^{-1/2}t\h[PU,PV]\right\|_Q^2.\]
  \end{theorem}
  \begin{remark}
    A precise definition of $z_t$, together with important properties, is given in Lemma \ref{lem:zt}. 
  \end{remark}
   
  \subsection{The Ricci and scalar curvatures of a Cheeger deformation}\label{sec:2.1}
  Finally, we take advantage of Theorem \ref{thm:curvaturasec} to present formulae for the limit Ricci curvature of $g_t$ as $t\to \infty$ as well as for the scalar curvature $\mathrm{scal}_{g_t}$ for any $t>0.$ 
   
  In what follows, fix $p\in M$, denote $\overline{X}=X+U^*\in T_pM$ and consider $\{v_1,\ldots,v_k\}$, a $Q$-orthonormal basis of eigenvectors of $P$ with eigenvalues $\lambda_1\leq\ldots\leq\lambda_k$. Additionally, consider
  a $g$-orthonormal basis $\{e_1,....,e_n\}$ for $T_pM$, where $\{e_{k+1},...,e_n\}$ is a basis for $\cal H_p$ and $e_i = \lambda_i^{-1/2}v^{\ast}_i$ for $i\leq k$.

\begin{definition}
  We define the \textit{horizontal Ricci curvature} of $g$ at $p$ as
  \begin{equation}\label{eq:horricci}
    \Ricci^{\mathcal{H}}(\overline{X}) := \sum_{i=k+1}^nR_g(\overline{X},e_i,e_i,\overline X).
  \end{equation}
\end{definition}
   
  \begin{lemma}\label{lem:baseapropriada}
    For any $\overline{X}=X+U^*\in T_pM$ one has
    \[ \lim_{t\to \infty}\Ricci_{g_t}(\overline{X})=\Ricci_g^{\cal H}(X) + \lim_{t\to \infty}\sum_{i=1}^n z_t(C_t\overline X,C_t^{1/2}e_i) + \frac{1}{4}\sum_j\|[v_j,U]\|^2_Q.\]
    Moreover, the scalar curvature of $g_t$ is given by:
    \begin{multline}
      \label{eq:scalarcurvature}
      \mathrm{scal}_{g_t}(p) = \sum_{i,j=1}^n\kappa_0(C_t^{1/2}e_i,C^{1/2}_te_j) + z_t(C_t^{1/2}e_i,C_t^{1/2}e_j) \\+\sum_{i,j = 1}^k\frac{\lambda_i\lambda_jt^3}{(1+t\lambda_i)(1+t\lambda_j)}\frac{1}{4}\|[v_i,v_j]\|_Q^2.
    \end{multline}
  \end{lemma}
  \begin{proof}
    Using Proposition \ref{propauxiliar}, it is easy to check that $\{C_t^{-1/2}e_i\}_{i=1}^n$ is a $g_t$-orthonormal basis for $T_pM$. Moreover, $C_t^{-1/2}e_i = (1+t\lambda_i)^{1/2}e_i$ for $i \le k$ and $C_t^{-1/2}e_i = e_i$ for $i > k.$ 
    We claim that the Ricci curvature of $g_t$ satisfies:
    \begin{align}\label{eq:riccicurvature}
      \Ricci_{g_t}(\overline{X}) =& \Ricci_g^{\mathcal{H}}(C_t\overline{X}) + \sum_{i=1}^nz_t(C_t^{1/2}e_i,C_t\overline{X}) \\&+ \sum_{i=1}^k\frac{1}{1+t\lambda_i}\Big(\kappa_0(e_i,C_t\overline{X}) + \frac{\lambda_it}{4}\|[v_i,tP(1+tP)^{-1}U]\|_Q^2\Big).  \nonumber  
    \end{align}
    Indeed, note that equation \eqref{eq:curvaturaseccional} implies that
    \begin{multline*}
      \Ricci_{g_t}(C_t^{-1}\overline{X}) = \sum_{i=1}^{n}R_{g_t}(C_t^{-1/2}e_i,C_t^{-1}\overline{X},C_t^{-1}\overline{X},C_t^{-1/2}e_i) =\sum_{i=1}^n\kappa_t(C_t^{1/2}e_i,\overline{X})\\
      = \sum_{i=1}^n\kappa_0(C_t^{1/2}e_i,\overline{X}) + \sum_{i=1}^nz_t(C_t^{1/2}e_i,\overline{X}) + \frac{t^3}{4}\sum_{i=1}^k\|[PC_t^{1/2}\lambda_i^{-1/2}v_i,PU]\|_Q^2
      \\= \Ricci_g^{\mathcal{H}}(\overline{X}) + \sum_{i=1}^nz_t(C_t^{1/2}e_i,\overline{X}) + \sum_{i=1}^k\frac{1}{1+t\lambda_i}\Big(\kappa_0(e_i,\overline{X}) + \frac{\lambda_it}{4}\|[v_i,tPU]\|_Q^2 \Big).
    \end{multline*}
    Equation \eqref{eq:riccicurvature} now follows by replacing $\overline{X}$ by $C_t\overline{X}$ above.
    Besides, $\lim_{t\to\infty} C_t\overline{X} = X$. Therefore $\Ricci_g^{\mathcal{H}}(C_t\overline{X})\to \Ricci_g^\cal H(X)$ and 
    \[\sum_{i=1}^k\frac{1}{1+t\lambda_i}\kappa_0(e_i,C_t\overline{X})\to 0.\]
    Moreover,
    \[\sum_{i=1}^k\frac{t\lambda_i}{1+t\lambda_i}\frac{1}{4}\|[v_i,tP(1+tP)^{-1}U]\|_Q^2\to \sum_{i=1}^k\frac{1}{4}\|[v_i,U]\|_Q^2.\]
    Equation \eqref{eq:scalarcurvature} follows from an analogous calculation.
  \end{proof}

  \section{The behavior of Cheeger deformations at singular orbits}
  \label{sec:estimates}
   
  In this section, we shall explore the limit behavior of Cheeger deformations at a \textit{ singular orbit} (Definition \ref{def:singular}) and its influence on the sectional, Ricci, and scalar curvatures. Although relatively elementary, these consist of the main steps in the article.
   
  Let $(M,g)$ be a compact connected Riemannian manifold equipped with an isometric action by a compact Lie group $G$. We recall that, as a consequence of the Slice Theorem (see, for example \cite[Theorem 3.49, p. 65]{alexandrino2015lie}), there is an open dense convex set, $M^{reg} \subseteq M$, called \textit{the regular stratum} of the $G$-action, where the orbits have maximal dimension. In particular, for all $p,q \in M^{reg}$ it holds that $\lie m_p$ and $\lie m_q$ are isomorphic. Moreover, the restriction of the quotient projection $M^{reg} \to M^{reg}/G$ defines a Riemannian submersion (see \cite[Theorem 3.82, p. 75]{alexandrino2015lie}).
  \begin{definition}\label{def:singular}
       The orbit through any point $p \in M^{reg}$ is called either a \emph{regular orbit} or a \emph{principal orbit}.
       Both the strata $ M\setminus M^{reg}$ and any orbit through it are called \emph{singular}. 
    \end{definition}
  To motivate our interest in singular orbits, we recall that \cite[Theorem 6.3, p. 33]{SperancaCavenaghiPublished} and \cite[Proposition 6.7]{searle2015lift} implies that, after Cheeger deformation, one can uniformly make the Ricci curvature positive in any compact $K\subseteq M^{reg}$, as long as principal orbits have finite fundamental group and the quotient $GK/G$ has positive Ricci. It is then only left to produce positive Ricci around singular orbits. To this aim, one usually needs two ingredients: estimates on the Ricci curvature at singular orbits and understanding the right hypothesis needed for these bounds to guarantee positive curvature. We deal with the first ingredient in this section. For the second part, we must consider how Searle--Wilhelm's hypothesis $(SW3)$ affects the geometry. This is relatively more delicate and is done in Section \ref{sec:proofmainintro}.

  This claim about positive Ricci in $K\subseteq M^{reg}$ easily follows from the limit expression in Lemma \ref{lem:baseapropriada}. Indeed, this is achieved since $z_t$ is non-negative and the hypotheses of $\Ricci_{M/G}\geq 1$ and $|\pi_1(G/G_p)|<\infty$ are translated as positive $\Ricci^{\cal H}_g$ and positive $\sum_j\|[v_j,U]\|^2$, respectively (see sections \ref{sec:itema} and \ref{sec:proofmainintro} below). However, once in a singular orbit, the $\Ricci^{\cal H}_g$-term becomes less related to the $(SW3)$-hypothesis and we are compelled to rely on $z_t$ to produce new curvature estimates. As a result, this section is devoted to the (interesting, but usually less explored) $z_t$-tensor.
   
  Let $q\in M\setminus M^{reg}$ and consider a horizontal geodesic $s\mapsto \gamma(s)$ starting at $q$ with initial velocity $X \in \cal H_q.$ Assume further that $\gamma((0,\epsilon])\subset M^{reg}$ for some $\epsilon>0$. We will see that $R_{g_t}(X,-,-,X)$ has a very special behavior with respect to vectors which are the limit of $G_q$-action fields. As a first step, denote the restriction of the isotropy representation of $G_q$ to $\cal H_q$ by $\rho:G_q\to O(\cal H_q)$ and observe that the differential of $\rho$ at the identity $e\in G_q$ defines a linear map $d\rho:\lie g_q\to \lie o(\cal H_q)$. Since the geodesic exponential is a $G_q$-equivariant map from $T_pM$ to $M$, it follows that 
  \[d\rho(U)X=(\nabla_XU^*)_q.\] 
   
  We can naturally define an extension of this skew-symmetric transformation (with fixed $X$) as the linear map 
  \begin{align*}
    \tilde S_X:\lie g&\to T_qM\\ U&\mapsto \nabla_XU^*_q.
  \end{align*} 
  We also remark that the restriction $U^*(s) := U^*(\gamma(s))$ is a Jacobi field, since isometric action vector fields are Killing.
   
  \begin{lemma}\label{lem:S_X}
    Let $q$ be a point in a singular orbit and $X\in \cal H_q$. Assume that the horizontal geodesic $\gamma : [0,\epsilon] \to M$ defined as $\gamma(s)=\exp_q(sX)$ intersects the regular stratum for any $s > 0.$ Then 
    \begin{enumerate}[$1.$]
      \item The image $\tilde S_X({\lie g_q})$ is contained in $\cal H_q$. Moreover, for $\epsilon>0$ sufficiently small, the following defines a smooth bundle on $\gamma([0,\epsilon))$
      \[\tilde{\cal H}_s=\begin{cases}\cal H_{\gamma(s)} ~~\text{if}~ s > 0\\ (\tilde S_X(\lie g_q))^\bot~ ~\text{if}~ s=0.
      \end{cases}
      \]
      \item the kernel of the restriction $\tilde S_X|_{\lie g_q}$ coincides with $\lie g_X$, the Lie algebra of $G_X=\{g\in G_q~|~\rho(g)X=X \}$.
    \end{enumerate} 
  \end{lemma}
  \begin{proof}
    We follow \cite{wilkilng-dual} (see also \cite{gw}). Consider the following family of Jacobi fields:
    \[\cal J=\{U^*|_\gamma~|~U\in\lie g\}+ \{J~|~J(0)=0,~J'(0)\in\cal H_{\gamma(0)}\}.\]
    Given $J_1,J_2\in \cal J$, as in \cite{gw}, we have
    \begin{equation}\label{eq:sympleticform}
        g(J_1'(s),J_2(s))=g(J_1(s),J_2'(s))
    \end{equation}
    for all $s$. That is, $\cal J$ is a $(n-1)$-dimensional family of normal Jacobi fields with self-adjoint Riccati operator. See \cite[Section 1.1]{guijwilhelm} for other applications coming from equation \eqref{eq:sympleticform}.
    
    From \cite{wilkilng-dual,gw} it follows that
    \begin{equation}
      \dot \gamma(s)^\bot=\mathrm{span}\{J(s)~|~J\in\cal J\}\oplus \mathrm{span}\{J'(s)~|~J\in\cal J,~J(s)=0 \}
    \end{equation}
    is an orthogonal splitting of $\dot{\gamma}(s)^\bot\subset T_{\gamma(s)}M$. In particular, if $U^*(0)=0$, then for every $V\in \lie g$ we have
    \[ g(\nabla_XU^*(0),V^*(0))=g(U^*(0),\nabla_XV^*(0))=0.\]
    Therefore, $\nabla_XU^*(0)\in \cal H_p$. 
    Observe further that $\cal J_1=\{U^*|_\gamma~|~U\in\lie g\}$ is itself a family with self-adjoint Riccati operators. Therefore, as stated in \cite{wilkilng-dual},
    \[ V(s)=\{J(s)~|~J\in\cal J_1\}+\{J'(s)~|~J\in\cal J_1,~J(s)=0\}\]
    is smooth along $\gamma$. In particular, $V(s)^\perp=\tilde{\cal H}_s$ is smooth. Item $2$ follows directly from the identity $d\rho(U)X=\nabla_XU^*(0)$. 
  \end{proof}

  Denote by $\lie{p}_X := \lie g_q\cap (\lie g_X)^\bot$ the $Q$-orthogonal complement of $\lie g_X$ on $\lie g_q$. Then Lemma \ref{lem:S_X} says that the restriction $\tilde S_X|_{\lie p_X} : \lie p_X \to \mathcal{H}_q$ is injective. We define:
  \begin{definition}\label{def:p_X}
    Let $q\in M$ be singular and $X \in \mathcal{H}_q$ be such that the horizontal geodesic $\exp_q(sX)$ lies in the regular part for every $s > 0.$ 
    \begin{itemize}
      \item Elements on the image $\tilde S_X(\lie p_X)$ are called \emph{fake horizontal vectors} with respect to $X$.
      \item Given $Y \in \mathcal{H}_q$, we denote by $Y_{\lie p_X}$ the unique element in $\lie p_X =  \lie g_q\cap (\lie g_X)^\bot$ such that $\tilde S_XY_{\lie p_X}$ is the orthogonal projection of $Y$ onto $\tilde S_X(\lie g_q)$. 
    \end{itemize}
  \end{definition}
   
  \begin{remark}
    The idea of taking the limit of horizontal vectors along horizontal geodesics is also present in Searle--Wilhelm \cite[section 4]{searle2015lift}. Here we provide straightforward estimates for $z_t$. 
  \end{remark}

  \subsection{The term $z_t$}
  \label{sec:zt}
  Let $X\in \cal H_q$ be a horizontal vector at $q\in M\setminus M^{reg}$ and suppose that $\exp_q(sX)$ lies in $M^{reg}$ for $s\in (0,\epsilon].$ Note that, since the regular stratum is dense, there is an open and dense set of horizontal directions with such a property and for which the following estimate holds. Therefore, it must hold for every $X,Y$.  The main result of this section is stated as:
  \begin{proposition}\label{prop:z_tgoes}
    Let $X,Y \in \cal H_q\setminus\{0\}$, where $Y$ is a fake horizontal with respect to $X$. Then,
    \begin{equation}
      z_t(X,Y) \geq 3t\dfrac{\|\tilde S_XY_{\lie p_X}\|_g^4}{\|Y_{\lie p_X}\|^2_Q}.\label{eq:blow-up}
    \end{equation}
  \end{proposition}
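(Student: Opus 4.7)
The plan is to bound $z_t(X,Y)$ at the singular point $p$ by continuity along the horizontal geodesic $\gamma(s):=\exp_p(sX)$, which lies in $M^{reg}$ for $s\ne 0$ small (Lemma \ref{lem:S_X}). Write $W:=Y_{\lie p_X}\in\lie p_X$, so that $\tilde S_X W$ is the $\tilde S_X(\lie p_X)$-component of $Y$. The core observation is that the fake-horizontal vector $\tilde S_X W\in\cal H_p$ at $p$ is realized along $\gamma$ as a genuinely vertical action field: the Jacobi expansion yields $W^*(\gamma(s))=s\tilde S_X W+O(s^2)$, since $W\in\lie g_p$ forces $W^*(p)=0$.

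The extensions I would use are $\bar X(s):=\dot\gamma(s)$ (horizontal at each $\gamma(s)$) and $\bar Y(s):=Y_H(s)+s^{-1}W^*(\gamma(s))$, where $Y_H(s)$ is any smooth section of the bundle $\tilde{\cal H}$ of Lemma \ref{lem:S_X} with $Y_H(0)=Y-\tilde S_XW$; that $Y-\tilde S_XW\in\tilde{\cal H}_0=(\tilde S_X(\lie g_p))^\perp$ follows from Definition \ref{def:p_X} together with Lemma \ref{lem:S_X}(ii). The Jacobi expansion then gives $\bar Y(s)\to Y$ as $s\to0$. At each regular $\gamma(s)$, $\bar X$ is purely horizontal and $\bar Y=Y_H(s)+V(s)^*$ with $V(s)=s^{-1}W$, so Theorem \ref{thm:curvaturasec} yields
\[
z_t(\bar X,\bar Y)=\kappa_t(\bar X,\bar Y)-R_g(\bar X,\bar Y,\bar Y,\bar X),
\]
since the $\tfrac{t^3}{4}\|[PU,PV]\|_Q^2$ term vanishes when $U=0$. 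I would then open $\kappa_t$ using the explicit form of $z_t$ in Lemma \ref{lem:zt} — equivalently, by viewing $g_t$ as a submersion quotient of $(M\times G,g\oplus t^{-1}Q)$ with O'Neill's correction $\tfrac{3}{4}\|[\tilde X,\tilde Y]^{\mathrm{vert}}\|^2$ — to obtain an expression in terms of the orbit tensor $P$ at $\gamma(s)$.

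Along $\gamma$, $P$ is almost-degenerate on $W$: the identity $Q(PW,W)=|W^*(\gamma(s))|_g^2$ combined with the Jacobi expansion gives $Q(PW,W)=s^2\|\tilde S_XW\|_g^2+O(s^3)$. After substitution and taking $s\to 0$, the surviving leading term in $\kappa_t-R_g$ evaluates to exactly $3t\,\|\tilde S_XW\|_g^4/\|W\|_Q^2$; the remaining pieces either vanish in the limit or are nonnegative and may be discarded to preserve the lower bound. The main obstacle is the careful book-keeping of these cancellations: since $V(s)=s^{-1}W$ has $\|V\|_Q\to\infty$, several contributions in $\kappa_t$ and $R_g$ individually behave singularly in $s$ and must cancel exactly in the difference $z_t=\kappa_t-R_g$ to leave a finite leading term. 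One also needs to verify that the $Y_H(s)$-contributions add only nonnegative or lower-order terms, which follows from the orthogonality $Y_H\perp\tilde S_X(\lie p_X)$ at $p$ together with the nonnegativity of $z_t$ applied to the horizontal-only piece in isolation.
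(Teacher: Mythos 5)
Your setup coincides with the paper's: continuity of $z_t$ along $\gamma(s)=\exp_p(sX)$ and the realization of the fake-horizontal part of $Y$ as the limit of the rescaled action fields $s^{-1}Y_{\lie p_X}^*(\gamma(s))$. But the decisive quantitative step is missing, and the route you propose for it does not work as stated. The paper does not expand $\kappa_t-R_g$ and track cancellations; it reads the bound off the max-formula of Lemma \ref{lem:zt}: since $X,Y$ are horizontal ($U=V=0$), $z_t(X,Y)=3t\max_{\|Z\|_Q=1}\,dw_Z(X,Y)^2/\bigl(tg(Z^{\ast},Z^{\ast})+1\bigr)$, and a lower bound follows from the single test element $Z=Y_{\lie p_X}/\|Y_{\lie p_X}\|_Q$. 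Because $Y_{\lie p_X}\in\lie g_p$, one has $Z^{\ast}(p)=0$, so the denominator is exactly $1$, and the numerator is computed by the Jacobi-field/second-derivative argument of Claim \ref{eq:lim1} to be $g(Y,\tilde S_XY_{\lie p_X})^2/\|Y_{\lie p_X}\|_Q^2=\|\tilde S_XY_{\lie p_X}\|_g^4/\|Y_{\lie p_X}\|_Q^2$. Your plan never selects a $Z$; instead it asserts that ``the surviving leading term in $\kappa_t-R_g$ evaluates to exactly $3t\|\tilde S_XW\|_g^4/\|W\|_Q^2$'' and that the rest ``vanishes or is nonnegative and may be discarded''. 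That assertion \emph{is} the proposition, and nothing in the proposal establishes it. Worse, $z_t$ is a maximum of a single squared quantity over a denominator, not a sum of identifiable nonnegative pieces, so there is no a priori way to discard the $Y_H$-contributions by ``nonnegativity of $z_t$ applied to the horizontal-only piece in isolation'': for a fixed $Z$ the cross term $dw_Z(X,Y_H)\,dw_Z(X,V^{\ast})$ may well be negative. In the correct argument the $Y_H$-part is harmless because, for the chosen $Z$, $dw_Z(X,Y_H(s))\to g(\tilde S_XZ,Y_H(0))=0$ by the orthogonality $Y_H(0)\perp\tilde S_X(\lie g_p)$ --- it vanishes; it is not removed by a sign argument.

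Two further points. First, the ``singular book-keeping'' you flag as the main obstacle is an artifact of your route: with $U=0$ the bracket term is absent and the max-formula depends on $V(s)=s^{-1}W$ only through bounded quantities ($V^{\ast}$, $g(Z^{\ast},Z^{\ast})$), so no divergences need to cancel if one works with Lemma \ref{lem:zt} directly. Second, $\gamma(s)$ need not lie in $M^{reg}$ for small $s\neq 0$ (this is not what Lemma \ref{lem:S_X} asserts, e.g.\ $X$ may be tangent to the singular stratum); the paper addresses this by proving the estimate for the open dense set of directions whose geodesics meet the principal stratum and then invoking continuity of $z_t$. As written, your argument silently assumes regularity along $\gamma$.
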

   
  In what follows, we prove Proposition \ref{prop:z_tgoes}. To this aim, for any $Z\in\lie g$ define the auxiliary $1$-form
  \begin{align}
    w_Z : TM &\to \mathbb{R}\\
    \overline X &\mapsto \textstyle\frac{1}{2}g(\overline X,Z^*), \label{eq:auxiliary}
  \end{align}
  where $Z^*$ is the action vector associated with $Z$. We shall use Lemma \ref{lem:S_X} and Lemma \ref{lem:zt} below, that provides a characterization of $z_t$. For a proof, see \cite[p. 23, Lemma 3.9]{Muter}. We remark, however, that contrary to \cite{Muter,mutterz}, we use the convention $d\omega(X,Y) = X\omega(Y) - Y\omega(X) - \omega([X,Y])$.
   
  \begin{lemma}\label{lem:zt}
    For every $\overline{X}=X+U^*,~\overline{Y}=Y+V^*$, $z_t$ satisfies
    \begin{equation}\label{eq:z_t-equation}
      z_t(\overline{X},\overline{Y}) = 3t\max_{\substack{Z \in \mathfrak{g}, \\ \|Z\|_Q = 1}}\dfrac{\{dw_Z(\overline{X},\overline{Y}) + \frac{t}{2}Q([PU,PV],Z)\}^2}{tg(Z^{\ast},Z^{\ast}) + 1}.
    \end{equation} 
     
    Moreover, at regular points,
    \begin{align}
      \label{eq:dw1}dw_Z(X,V^{\ast}) &= \frac{1}{2}Xg(V^*,Z^*)=-g(S_XV^*,Z^*),\\
      \label{eq:dw2}dw_Z(X,Y)&=-\frac{1}{2}g([X,Y]^{\mathcal{V}},Z^*) = -g(A_XY,Z^*),
    \end{align}
    where $A_XY=p_{\cal V}(\nabla_XY)$, $S_XV^*=-p_\cal V(\nabla_XV^*)$ and $p_\cal V$ denotes the orthogonal projection onto $\cal V=\cal H^\bot$.
  \end{lemma}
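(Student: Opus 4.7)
The plan is to realize $g_t$ via the Riemannian submersion $\pi'\co(M\times G,g\times t^{-1}Q)\to(M,g_t)$ of \eqref{eq:pi'cheeger} and recover $z_t$ from O'Neill's $A$-tensor. The vertical space at $(p,e)$ for action \eqref{eq:actioncheeger} is $\{(Z^*_p,-Z):Z\in \lie g\}$, so I would first pin down the horizontal lift of $\overline X=X+U^*$: imposing $d\pi'\tilde{\overline X}=\overline X$ and $(g\times t^{-1}Q)$-orthogonality to every $(Z^*_p,-Z)$ forces
\[\tilde{\overline X}=(X+((1+tP)^{-1}U)^*_p,\,tP_tU),\]
with the $G$-component lying in $\lie m_p$. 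Gray--O'Neill then yields
\[R_{g_t}(\overline X,\overline Y,\overline Y,\overline X) = R_{g\times t^{-1}Q}(\tilde{\overline X},\tilde{\overline Y},\tilde{\overline Y},\tilde{\overline X}) + 3\|A_{\tilde{\overline X}}\tilde{\overline Y}\|^2.\]
Since $(g,t^{-1}Q)$ is a product, the first term splits into an $M$-contribution that reconstructs $R_g(\overline X,\overline Y,\overline Y,\overline X)$ after the $C_t^{-1}$-reparametrization in \eqref{eq:curvaturaseccional}, and a $G$-contribution which, using $R_Q(W,W',W',W)=\tfrac14\|[W,W']\|_Q^2$ together with $W=tP_tU$, $W'=tP_tV$ and the $t^{-1}$-scaling of $Q$, reconstructs $\tfrac{t^3}{4}\|[PU,PV]\|_Q^2$. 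Hence $z_t=3\|A_{\tilde{\overline X}}\tilde{\overline Y}\|^2$.

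To convert this norm into the stated Rayleigh-quotient form, I would pair $[\tilde{\overline X}^{\text{ext}},\tilde{\overline Y}^{\text{ext}}]$ (computed with $G$-invariant horizontal extensions) with a generic vertical vector $(Z^*,-Z)$. The $M$-factor of the pairing, after Cartan's formula applied to $w_Z=\tfrac12 g(\,\cdot\,,Z^*)$, becomes $2\,dw_Z(\overline X,\overline Y)$; the $G$-factor, using the bi-invariant bracket on $\lie g$ together with $W=tP_tU$, contributes $tQ([PU,PV],Z)$. Dividing the squared pairing by $\|(Z^*,-Z)\|^2_{g\times t^{-1}Q}=g(Z^*,Z^*)+t^{-1}Q(Z,Z)$ and rescaling so that $\|Z\|_Q=1$ yields the Rayleigh quotient of \eqref{eq:z_t-equation}. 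The maximum over unit-$Q$-norm $Z$ then matches $\|A\|^2$ through the identity $\|\ell\|_{q^{-1}}^2=\max_Z \ell(Z)^2/q(Z)$ for linear $\ell$ on $\lie g$ and positive-definite quadratic $q$, which is precisely how the sum-over-basis shape of $\|A\|^2$ reorganizes as a single max.

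For the identities at regular points I would extend $X,Y$ to $G$-invariant horizontal fields in a neighborhood, so that $[V^*,X]=0$ and $g(X,Z^*)\equiv 0\equiv g(Y,Z^*)$. Cartan's formula collapses $dw_Z(V^*,X)$ to $\tfrac12 Xg(V^*,Z^*)$ (modulo the sign convention spelled out in the footnote), and the identification $\tfrac12 Xg(V^*,Z^*)=-g(S_XV^*,Z^*)$ follows by expanding $Xg(V^*,Z^*)=g(\nabla_XV^*,Z^*)+g(V^*,\nabla_XZ^*)$ and noting that the Killing equations for $V^*$ and $Z^*$ force the two summands to coincide once $g(X,[V^*,Z^*])=0$, which holds because $[V^*,Z^*]$ is vertical and $X$ is horizontal. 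Equation \eqref{eq:dw2} is then immediate: the first two Cartan terms vanish by $g(X,Z^*)=g(Y,Z^*)\equiv 0$, leaving $dw_Z(X,Y)=-\tfrac12 g([X,Y]^{\cal V},Z^*)=-g(A_XY,Z^*)$ by definition of the $M$-intrinsic O'Neill $A$-tensor. The main obstacle is the bookkeeping: one must track the horizontal-lift factor $tP_t$, the $t^{-1}$-rescaling of $Q$, and the $C_t^{-1}$-reparametrization consistently enough that the $\tfrac{t^3}{4}\|[PU,PV]\|_Q^2$ piece extracts cleanly while the remainder assembles into the Rayleigh quotient with the $tg(Z^*,Z^*)+1$ denominator.
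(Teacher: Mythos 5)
Your proposal is correct in substance, and it is essentially \emph{the} proof: the paper itself does not prove Lemma \ref{lem:zt} but defers to M\"uter's thesis, and M\"uter's argument is precisely the one you reconstruct — realize $g_t$ via the submersion $(M\times G, g\times\tfrac1tQ)\to(M,g_t)$, identify the vertical space as $\{(Z^*_p,-Z)\}$, compute the horizontal lift, split the Gray--O'Neill formula along the product, and convert $3\|A_{\tilde X}\tilde Y\|^2$ into a Rayleigh quotient over unit vertical directions, with the pairing evaluated through the tensorial $2$-form $d\theta_Z$, $\theta_Z=(g\times\tfrac1tQ)(\cdot,(Z^*,-Z))$. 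Two small points deserve care in the write-up. First, to get the formula exactly as stated you must work throughout with the lifts of $C_t^{-1}\overline X$, $C_t^{-1}\overline Y$ (whose $G$-components are $tPU$, $tPV$), not of $\overline X$, $\overline Y$ (whose $G$-components are $tP_tU$, $tP_tV$ as you wrote): with your $W=tP_tU$ the $G$-contributions come out as $\tfrac{t^3}{4}\|[P_tU,P_tV]\|_Q^2$ and $tQ([P_tU,P_tV],Z)$, and only after substituting $U\mapsto(1+tP)U$, $V\mapsto(1+tP)V$ — i.e.\ the $C_t^{-1}$-reparametrization you flag but do not carry out — do these become $\tfrac{t^3}{4}\|[PU,PV]\|_Q^2$ and $tQ([PU,PV],Z)$. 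Second, pairing $[\tilde X^{\mathrm{ext}},\tilde Y^{\mathrm{ext}}]$ with a fixed vertical vector is not extension-independent; the clean statement is $2(g\times\tfrac1tQ)(A_{\tilde X}\tilde Y,(Z^*,-Z))=-d\theta_Z(\tilde X,\tilde Y)$, valid because $\theta_Z$ annihilates horizontal extensions, after which the product structure splits $d\theta_Z$ into the $2\,dw_Z(\overline X,\overline Y)$ term and the $Q$-bracket term exactly as you describe. The sign ambiguity you leave open in \eqref{eq:dw1} is harmless for \eqref{eq:z_t-equation} (only squares enter) and is anyway tied to the exterior-derivative convention the paper's footnote already singles out; your verification of the second equalities in \eqref{eq:dw1}--\eqref{eq:dw2} via the Killing identities and verticality of $[V^*,Z^*]$ matches the standard computation.
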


   \begin{proof}[Proof of Proposition \ref{prop:z_tgoes}]
  Given a fake horizontal $Y \in \tilde S_X(\lie p_X)$, we define
  \begin{equation}
    Y(s) := \frac{1}{s}Y^{\ast}_{\lie p_X}(\gamma(s)),
  \end{equation}
  where $\gamma(s)$ is the geodesic generated by $X$. It is easy to see that $Y(s)$ is well-defined and, according to Lemma \ref{lem:S_X},
  \begin{equation}\label{eq:definitions}\lim_{s\to 0}Y(s) = \nabla_XY^{\ast}_{\lie p_X}(0) = d\rho(Y_{\lie p_X})X = Y.\end{equation}
   
  Our next goal consists in computing $dw_Z(Y,X)$. In what follows, we write $X(s)=\frac{d}{ds}\exp(sX)$ and, for any $W\in \lie g$, we let $W^*(s):=W^*(\gamma(s))$. 
    We claim that 
    \begin{equation}\label{eq:lim1}
      \underset{s\to 0^{+}}{\lim}dw_Z(Y(s),X(s))= g\left(Y,\nabla_XZ^*(0)\right).
    \end{equation}

    Indeed, on the one hand, for any $s>0$, equation \eqref{eq:dw1} implies that
    \begin{equation*}
  dw_Z(Y(s),X(s))=\frac{1}{2s}d\omega_Z(Y^*_{\lie p_X}(s),Z^*(s))=\frac{1}{2s}Xg(Y_{\lie p_X}^*(s),Z^*(s)).
    \end{equation*}
    On the other hand, since $Y^{\ast}_{\lie p_X}(0) = 0,$ equation \eqref{eq:dw1} gives
    \begin{equation}\label{eq:deuzero}0=dw_Z(Y_{\lie p_X}^*(0),X) = \frac{1}{2}Xg(Y_{\lie p_X}^*,Z^*)|_{s=0}.\end{equation}
Therefore, 
\begin{multline*}
      \lim_{s\to 0^{+}}dw_Z(Y(s),X(s))=\frac{1}{2}\frac{\partial^2}{\partial s^2}\Big|_{s=0}g(Y_{\lie p_X}^*(s),Z^*(s)) \\= \frac{1}{2}\left\{g\left(\frac{D^2}{ds^2}Y_{\lie p_X}^*(0),Z^*(0)\right) + g\left(Y_{\lie p_X}^*(0),\frac{D^2}{ds^2}Z^*(0)\right) + 
      2g\left(\frac{D}{ds}Y_{\lie p_X}^*(0),\frac{D}{ds}Z^*(0)\right)\right\}. 
    \end{multline*}
    The claim follows since $\frac{D^2}{ds^2}Y_{\lie p_X}^*(0) = - R(Y_{\lie p_X}^*(0),X(0))X(0)=0$ and, according to equation \eqref{eq:definitions}, one has $\frac{D}{ds}Y_{\lie p_X}^*(0) = Y$.

    Now observe that, since $z_t(X,Y)\geq 0$, Proposition \ref{prop:z_tgoes} follows immediately if $Y\bot \tilde S_X(\lie g_p)$, once $Y_{\lie p_X} = 0$. If $Y_{\lie p_X}\neq 0$, take $Z = Y_{\lie p_X}/\|Y_{\lie p_X}\|_Q$ on equation \eqref{eq:z_t-equation} and apply equation \eqref{eq:lim1}. Since $Z^*(0) = 0$ and $\frac{D}{ds}Z^{\ast}(0) = \frac{\tilde S_XY_\lie p}{\|Y_\lie p\|_Q}$, one has
    \begin{align*}
      z_t(X,Y) &\geq 3t\dfrac{g\!\left(Y,\frac{D}{ds}Z^{\ast}(0)\right)^2}{tg\!\left(Z^{\ast}(0), Z^{\ast}(0)\right) + 1}
      = 3tg\!\left(Y,\frac{\tilde S_XY_{\lie p_X}}{\|Y_{\lie p_X}\|_Q}\right)^2
      = 3t\frac{\|\tilde S_XY_{\lie p_X}\|_g^4}{\|Y_{\lie p_X}\|_Q^2}.\qedhere
    \end{align*}
  \end{proof}
   
  \section{Sufficient conditions for positive curvatures and Applications}
  \label{sec:itema}
   
  In this section, we explore the blow-up behavior of $z_t$. In particular, we prove Theorem \ref{ithm:obstruction}, item 1, and Corollary \ref{icor:quotient}. We also analyze the limit of the sectional curvature under Cheeger deformations, obtaining the condition in Theorem \ref{ithm:CDR}. Finally, we apply the limiting analysis to both cohomogeneity-one manifolds and scalar curvature, proving Theorems \ref{ithm:GZ} and \ref{ithm:LW}.
   
  \subsection{Lifting positive Ricci curvature via Cheeger deformations}
   
  This paragraph is dedicated to the proof of the following result, which concerns our first goal of this subsection.
   
  \begin{theorem}\label{thm:sufficient}
    Let $(M,g)$ be a compact Riemannian manifold with a $G$-action satisfying $(SW1)$-$(SW3)$.
    If, for every $t > 0$, there exists a unit vector $\overline X\in T_pM$ such that $\Ricci_{g_t}(\overline X) < 0,$ then there exists a point $q\in M\setminus M^{reg}$ and a non-zero vector $X\in \cal H_q$ fixed by the isotropy representation at $q$.
  \end{theorem}
   
  For the proof, we need the following auxiliary result. Although it follows from \cite[Proposition 3.3]{searle2015lift}, here we give a different proof based on equation \eqref{eq:riccicurvature}. 
  \begin{lemma}\label{lem:regularconvergence}
    Consider the Riemannian submersion $\pi{:}~(M^{reg},g)\to (M^{reg}/G,\bar g)$. If $p\in M^{reg}$ then
    \begin{equation}\label{eq:SW}\lim_{t\to \infty}\Ricci_{g_t}({X})=\Ricci_{\bar g}(d\pi X),\quad\forall X\in\mathcal{H}_p.\end{equation}
  \end{lemma}
  \begin{proof}
    Observe that
    \begin{equation*}\label{eq:lim_ric_hor}
      \lim_{t\to \infty}\Ricci_{g_t}({X}) = \Ricci_g^{\mathcal{H}}({X}) + \sum_{i=1}^n\lim_{t\to \infty}z(C_t^{1/2}e_i,{X})
    \end{equation*}
    as long as $\lim_{t\to\infty} z_t(C_t^{1/2}e_i,X)$ exists for all $i$. 
    Now, on the one hand, Lemma \ref{lem:zt} (see also \cite{Muter} or \cite{mutterz}) gives:
    \begin{align}
      \label{eq:essaajuda}z_t(X,Y)&={3}\max_{\substack{Z \in \mathfrak{g} \\ \|Z\|_Q = 1}}\left\{\dfrac{g(A_XY,Z^*)^2}{g(Z^{\ast},Z^{\ast}) + t^{-1}}\right\}
      \\z_t(X,W^*)&=3\max_{\substack{Z \in \mathfrak{g} \\ \|Z\|_Q = 1}}\left\{\dfrac{g(S_XW^*,Z^*)^2}{g(Z^{\ast},Z^{\ast}) + t^{-1}}\right\} 
    \end{align} 
    for all $Y\in\cal H_p$, $W\in\lie g$. In particular, $z_t(C_t^{1/2}e_i,X)\to 0$ for $i\leq k$, since $C_t^{1/2}e_i\to 0$, and $z_t(X,Y)\to 3\|A_XY\|_g^2$. On the other hand, O'Neill's submersion formula and equation \eqref{eq:curvaturaseccional} give
    \[R_{\bar g}(d\pi X,d\pi Y,d\pi Y, d\pi X)-R_{g_t}(X,Y,Y,X)=3\|A_XY\|^2_g-z_t(X,Y), \]
    completing the proof.
  \end{proof}
   
  \begin{proof}[Proof of Theorem \ref{thm:sufficient}]
    Assume that, for every $m\in\bb N$, there is a $g$-unit vector $\overline X_m=X_m+U^*_m\in T_{p_m}M$ such that $\Ricci_{g_m}(\overline X_m)<0$. By compactness, we can pass to a convergent subsequence to obtain $m^*>0$ and a limiting $g$-unit vector $\lim_{m\to \infty}\overline X_m=\overline X\in T_pM$, such that $\Ricci_{g_{m}}(\overline X)\leq0$ for all $m>m^*$.
    We will show that $q=\lim_{m\to \infty} p_m$ lies in a singular orbit and that $\rho(G^0_q)X=X$ where $G^0_q$ stands for the connected component of the identity of $G_q$.
     
    Indeed, using the same bases as in Lemma \ref{lem:baseapropriada}, it gives
    \begin{equation}\label{eq:contradiction}
      0 \geq \Ricci_g^{\cal H}(X) + \lim_{m \to \infty}\sum_{i=1}^n z_m(C_m\overline X,C_m^{1/2}e_i) + \frac{1}{4}\sum_j\|[v_j,U]\|^2_Q.
    \end{equation}
    Writing $\overline X = X + U^*$ and noting that, by equation \eqref{eq:z_t-equation} $z_m(C_mU,C_m^{1/2}) \to 0$ as $m\to \infty$, Lemma \ref{lem:regularconvergence} then implies that $q$ lies in the singular strata, otherwise we would have
    \[0\leq\Ricci_{\overline g}(d\pi X) + \frac{1}{4}\sum_j\|[v_j,U]\|^2_Q,\]
    which contradicts the hypothesis of positive Ricci curvature on $M^{reg}/G$. Moreover, $X\neq 0$, since, otherwise \eqref{eq:contradiction} would imply that
    \begin{equation}\label{eq:contradicts} 0 \geq \lim_{m \to \infty}\sum_{i=1}^n z_m(C_mU^*,C_m^{1/2}e_i) + \frac{1}{4}\sum_j\|[v_j,U]\|^2_Q.\end{equation}
    Equation \eqref{eq:contradicts} is a contradiction since $z_t$ is nonnegative and $\sum_j\|[v_j,U]\|^2_Q>0$ whenever $|\pi_1(Gq)|<\infty$. To conclude the latter, observe that the term $\sum_j\|[v_j,U]\|^2_Q$ is closely related to the Ricci curvature of $G/G_q$ in its normal homogeneous metric. Indeed,
    \[\Ricci_{G/G_q}(U)=\sum_j\|[v_j,U]\|^2_Q+\sum_j3\|[v_j,U]^{\lie m_q}\|^2_Q.\] 
    In particular, $\Ricci_{G/G_q}(U)=0$ if $\sum_j\|[v_j,U]\|^2_Q=0$. On the other hand, if $|\pi_1(Gq)|<\infty$, $\Ricci_{G/G_q}$ is a positive definite bilinear form. Therefore, $\sum_j\|[v_j,U]\|^2_Q=0 $ implies $U=0$, a contradiction to the fact that $\overline X=U$ is $g$-unit. 
     
    To conclude that $|\pi_1(Gq)|<\infty$, one uses the hypothesis $(SW2)$:
    For every isotropy subgroup $H$ of points in the regular stratum, it holds that $\pi_1(G/H)$ is finite. Since we can assume that, up to conjugation, $H<G_q$, the long exact sequence in homotopy $G_q/H\hookrightarrow G/H\to G/G_q$ then gives:
    \[\cdots\rightarrow \pi_1(G/H)\rightarrow \pi_1(G/G_q)\to \pi_0(G_p/H)\to \{0\} \]
    Therefore, $\pi_1(G/G_q)$ is finite if, and only if, $G_q/H$ has a finite number of connected components. Which is true, since $G_q$ is a closed subgroup of a compact group. We thus conclude that $X\neq 0$. 
     
    To conclude that $X$ is fixed by $\rho(G^0_q)$, note that, otherwise, there would be a fake horizontal vector with respect to $X$ (Lemma \ref{lem:S_X}). Hence, by applying Proposition \ref{prop:z_tgoes}, we would conclude that the sum $\sum_{i=1}^n z_m(C_m\overline X,C_m^{1/2}e_i)$ diverges, contradicting inequality \eqref{eq:contradiction}.
  \end{proof}
   
  To prove Corollary \ref{icor:quotient}, we further observe that the existence of an $X$ fixed by $\rho(G_q^0)$ is related to the existence of a geodesic inside the singular strata. 
   
  \begin{proof}[Proof of Corollary \ref{icor:quotient}] Assume that there is $X\in\cal H_q\setminus\{0\}$ fixed by $\rho(G^0_q)$. Since the exponential map is $\rho$-equivariant, we have
  \[g\exp_{q}(sX)=\exp_{\rho(g)q}(\rho(g)sX)=\exp_{q}(sX)\]
  for every $s$ and $g\in G_q^0$. Therefore, $G_{\exp_q(sX)}\supseteq G_q^0$, so we conclude that $\exp_q(sX)$ is in $M\setminus M^{reg}$. Therefore, the singular strata is not composed of isolated points, as wanted.
  \end{proof}
   
  \subsection{A condition for positive sectional curvature}
   
  \label{sec:limit}
  To motivate our approach for the second goal, we make a small digression.
  As observed in \cite{Searle2015}, Cheeger deformations ``renormalizes'' the metric in the foliation induced by the $G$-orbits as $t\to \infty$. More precisely, $g_t$ approaches a metric with totally geodesic orbits as $t\to \infty$. For instance, assuming that the action is free, one can consider the connection metric $\tilde g_t$ defined through the original vertical and $g$-horizontal spaces by: $\tilde g_t(\cal H,\cal V)=0$; $\tilde g_t|_{\cal H}=g_t|_{\cal H}$; and $\tilde g_t(U^*,V^*)=t^{-1}Q(U,V)$.
   
  Roughly, $g_t$ approaches $\tilde g_t$ as $1/t^2$. So much so that, recalling that a Cheeger deformation does not produce negative curvature, one is tempted to conjecture the following:

  \begin{conjecture}\label{conj:biglie}
    Suppose that $G$ is a compact Lie group that acts freely and by isometries on a compact Riemannian manifold $(M,g)$. If $g$ has positive sectional curvature, then $M$ admits a metric with nonnegative sectional curvature where every $G$-orbit is totally geodesic.
  \end{conjecture}
   
  As pointed out in \cite{speranca_WNN}, this conjecture implies strong restrictions in the manifold, including that the related principal bundle $M\to M/G$ is \textit{fat}, i.e., for any non-zero horizontal vector $X$ and any horizontal extension $\widetilde X$ of it, it holds that $[\widetilde X,\cal H]^{\cal V} = \cal V$; (we redirect the reader to \cite{weinstein1980fat,ziller2000fatness} or \cite{gw} for definitions and results). This implies, for instance, Petersen--Wilhelm's dimension restriction conjecture (see \cite{gonzalez2017note}), for the case of principal bundles. 
   
  Although it provides a promising approach, Cheeger deformation does not solve conjecture \ref{conj:biglie} directly. Heuristically, the difference between the curvatures of $g_t$ and $\tilde g_t$ decreases as $1/t$. Since the (vertizontal) curvature of both decay with $1/t^2$, the approaching speed is not enough to guarantee that $\tilde g_t$ has non-negative curvature. More specifically, Theorem \ref{ithm:CDR} provides one (and the only one) curvature condition that is preserved under Cheeger deformation. 
   
  Here we explore the limit behavior of the Cheeger deformation and present a necessary and sufficient condition for a $G$-manifold to have positive sectional curvature after finite Cheeger deformation. Although seemly hard to apply, this condition amounts to the intrinsic geometry of the fiber (decoded by the orbit tensor $P$) being relatively decoupled from O'Neill's integrability tensor.
   
 Although the next lines are already presented in the Introduction, for the readers' convenience, we recall some notation before proceeding to the proof of the Theorem \ref{ithm:CDR}.
  Given a biinvariant inner product $Q$ on $\lie g$, for every $p\in M^{reg}$ we denote by $\Omega:\cal H_p\times\cal H_p\to \lie m_p$ the \textit{curvature 2-form} of the bundle $M^{reg}\to M^{reg}/G$. Specifically, given two horizontal vector fields $X,Y$, we define $\Omega(X,Y)$ as the unique element in $\lie m_p$ such that \[\Omega(X,Y)^*=-[X,Y]\] (see \cite[p. 70]{gw}.) It is equivalently characterized by:
  \[g(\Omega_X^*V^*,Y)=Q(\Omega(X,Y),V)=-2g(A^*_XP^{-1}V^*,Y).\]
  In particular, $-2A^*P^{-1}V^*=\Omega^*V$. 
  We now prove Theorem \ref{ithm:CDR}. That is, if $\sec_{G/G_p}>0$, then $g_t$ has positive sectional for any $t$ sufficiently large, if and only if
  \begin{multline*}
    (R_B(X,Y,Y,X)-k\|X\wedge Y\|^2_g)\\
      (\tfrac{1}{2}Q(\Hess P^{-1}(X)V,V) +\tfrac{1}{4}\|\Omega^*_XV\|_{g}^2-k\|X\|^2_gQ(P^{-1}V,V))\\\geq Q((\nabla_X\Omega)_XY,V)^2
    \end{multline*}
    for every $X,Y\in \cal H_p$, $V\in\lie m_p$ and $p\in M^{reg}$.
  \begin{remark}
    Here we think of $P$ as a function from $M$ to the linear endomorphisms of $\lie g$ (setting $P\lie g_p=\{0\}$). Given a basis of $\lie g$, $\Hess$ is computed as the Hessian of each entry in the matrix of $P$ on that basis. 
  \end{remark}

  \begin{proof}[Proof of Theorem \ref{ithm:CDR}]
    Since $\kappa_t$ is the unnormalized sectional curvature of $g_t$ reparametrized by a continuous parameter, we conclude that $\mathrm{sec}_{g_t}>0$ if and only if there exists $k > 0$, not depending on any particular plane, such that $\lim_{t\to\infty}\kappa_t(X,Y+V^*)\geq k\|X\wedge (Y+V^*)\|^2_g$. 
     
    Let $p\in M^{reg}$ and consider the plane $\{\overline X,\overline Y\}\in \mathrm{Gr}_2(T_pM)$ where $\overline{X},\overline{Y}$ is a $g$-orthonormal pair. As pointed out in Section \ref{sec:cdef}, $C^{-\h}_t\overline X,C^{-\h}_t\overline Y$ is $g_t$-orthonormal. We compute 
    \begin{equation*}
    R_{g_t}(C_t^{-\h}X,C_t^{-\h}(Y+V^*),C_t^{-\h}(Y+V^*),C_t^{-\h}X)=\kappa_t(C_t^\h X,C_t^\h(Y+V^*)).
    \end{equation*}  
    Observe that the right-hand side decays like $t^{-2}$. To correct this rate of decay, we consider the following family of linear isomorphisms $L_t : T_pM \to T_pM$: \begin{equation}L_t(Y+V^*)=Y+t^{\h}C_t^{\h}P^{-\h}V^*.\end{equation} 
    Note now that the term $t^{\h}$ compensates the decay of $C_t^\h$ while $P^{-\h}$ works as a $t$-independent {renormalization} which takes the leaf metric into account: $g(P^{-\h}V^*,P^{-\h}U^*)=Q(V,U)$. Furthermore, a direct calculation shows that $\lim_{t\to\infty}L_t = P^{-1}$. 
   
    Now since the parametrization $L_t$ still covers all planes of $TM$, it suffices to show that
    \begin{equation}
    \label{eq:conditionL}
  \lim_{t\to\infty}\kappa_t(L_tX,L_t(Y+V^*))\geq k\|X\wedge (Y+P^{-1}V^*)\|^2_g.
    \end{equation} 
  
  To this aim, note that
    \begin{multline}\label{eq:limitkappa}
    \lim_{t\to\infty}\kappa_{t}(X, Y+t^\h C_t^{\h}P^{-\h}V^*)=
  \kappa_0(X,Y)+2R_g(X,Y,P^{-1}V^*,X)\\+\kappa_0(X,P^{-1}V^*)+z_\infty(X,Y+P^{-1}V^*),
    \end{multline}
    where \eqref{eq:essaajuda} gives
    \begin{equation}
      \label{eq:zinfty}
       z_{\infty}(X,Y+V^*):=\lim_{t\to\infty} z_t(X,Y+V^*)=3\|A_XY+S_XV^*\|^2_g.
    \end{equation}

   Also, we can see equation \eqref{eq:limitkappa} as a quadratic function on $V^*$ by considering the change $V^* \mapsto \lambda V^*$, where $\lambda \in \mathbb{C}$, resulting in a polynomial $p(\lambda) = a\lambda^2 + b\lambda + c$, for some $a, b, c \in \mathbb{R}$.

    Since for any degree two polynomial to be non-negative we only need to understand the sign of the coefficients ($a$ and $c$) and its discriminant $b^2 - 4ac$, applying these criteria we conclude that there exists $k > 0$ such that \eqref{eq:conditionL} holds if, and only if, there exists $k > 0$ such that
    \begin{multline*}
    (K_{M/G}(X,Y)-k\|X\wedge Y\|^2_g)(\kappa_0(X,P^{-1}V^*)-3\|S_XP ^{-1}V^*\|_g^2-k\|X\|^2_gQ(P^{-1}V,V))\\\geq \frac{1}{4}(R_g(X,Y,P^{-1}V^*,X)+3 g\left(S_XP^{-1}V^*,A_XY\right))^2,
    \end{multline*}
    where we have used that $\kappa_0(X,Y) = K_{M/G}(X,Y) - 3\|A_XY\|^2$.
     
    Now let $\gamma(s)$ be the geodesic defined by $X$. Recall that the restriction of $V^*$ to $\gamma$ defines a holonomy Jacobi field (see \cite[Definition 1.4.3, p.17]{gw}), or, equivalently, $\nabla_X^{\cal V} V^*=-S_XV^*$. Moreover, for any $W\in \lie g$,
    \[g(W^*(\gamma(s)),P^{-1}V^*(\gamma(s)))=Q(PW,P^{-1}V)=Q(W,V) \]
    does not depend on $s$. 
    
    Gathering this informantion with the expression for the vertizontal curvature of a Riemannian submersion (see \cite[Corollary 1.5.1, p. 28]{gw})
    \begin{equation}\label{eq:vertizontal}
        K_{g}(X,V^*) = \langle (\nabla S_X)_XV^*,V^*\rangle - \|S_XV^*\|^2 + \|A^*_XV^*\|^2
    \end{equation}
    one gets
    \begin{equation}\label{eq:Hess}
  \kappa_0(X,P^{-1}V^*)-3\|S_XP ^{-1}V^*\|^2= \tfrac{1}{2}Q(\mathrm{Hess} P^{-1}(X)V,V) +\tfrac{1}{4}\|\Omega^*_XV\|_{g}^2
    \end{equation} where we have used that
    \begin{equation}\label{eq:dualderivative}
      \nabla_X^{\cal V}(P^{-1}_sV)^*=S_X(P^{-1}_sV)^*
    \end{equation}
    (we refer to \cite{speranca_oddbundles} for further details). 
    
    We remark that equation \eqref{eq:dualderivative}, in its turn, follows from
    \begin{align*}
      g(\nabla_X^{\cal V}(P^{-1}_sV)^*,W^*) &= Xg((P^{-1}_sV)^*,W^*) - g((P^{-1}_sV)^*,\nabla^{\cal V}_{X}W^*)\\
      &= -g((P^{-1}_sV)^*,\nabla^{\cal V}_XW^*)\\
      &= g((P^{-1}_sV)^*,(S_{X}W^*))\\
      &= g(S_{X}(P^{-1}_sV)^*,W^*).
    \end{align*}
     
    The term $R_g(X,Y,P^{-1}V^*,X)+3g({S_X\left(P^{-1}V\right)^*,A_XY})$ appears in \cite{speranca_WNN} where it is proved to be independent of $P$. Here we identify this term with $-2Q(\frac{d}{ds}\Omega(X,Y),V)$. We assume $\nabla_XX=\nabla_XY^{\cal H}=0$ until the end of the proof and recall that (see \cite[section 1.9]{gw}):
    \begin{equation*}  R_g(X,Y,\left(P_s^{-1}V\right)^*,X)= g((\nabla_XA)_XY,P_s^{-1}V^*)-2g(S_X\left(P_s^{-1}V\right)^*,A_XY).
    \end{equation*}
    Therefore, \eqref{eq:dualderivative} gives
    \begin{align*}
      R_g(X,Y,(P_s^{-1}V)^*,X)+&3g({S_X(P^{-1}V)^*,A_XY}) \\&= g((\nabla_XA)_XY,(P^{-1}V)^*) + g(S_X(P^{-1}V)^*,A_XY)\\
      &= g\left(\nabla_X(A_XY),(P^{-1}V)^*\right) +  g(S_X(P^{-1}V)^*,A_XY)\\ &=Xg(A_XY,(P^{-1}V)^*)\\
      &=-2XQ(\Omega(X,Y),V)\\
      &=-2Q(V,\nabla_X\Omega(X,Y)). \qedhere
    \end{align*}
  \end{proof}
   
  \subsection{Positive Ricci curvature of cohomogeneity-one manifolds}
   
    Our next step is to prove Theorem \ref{ithm:GZ}. Cohomogeneity-one manifolds present a special challenge. Here we again consider the limit behavior of the Cheeger deformation, but now considering that there is no horizontal curvature.
We deal with the case where the quotient space is a closed interval $M/G=[0,R]$ and denote $\pi:M\to [0,R]$ as the quotient map. Let $\gamma:[0,R]\to M$ be a horizontal geodesic satisfying $\pi\circ \gamma(s)=s$. Denote $\dot \gamma=X$. From the $G$-invariance of the curvature, it suffices to show that $\Ricci_{g_t}>0$ along $\gamma$. As in \cite{Grove2002}, we use diagonal metrics and (to a lesser extent) a disk bundle argument to provide suitable smoothness conditions at the singular orbits. In the process, we also show that not every invariant metric has positive Ricci curvature, even after Cheeger deformation. This points to the fact that $M/G$ is flat, contrasting with Theorem \ref{ithm:searleintro}.
   
  The natural steps would be to produce an initial metric $g$ and then use a sequential argument in the spirit of the proof of Theorem \ref{thm:sufficient}. To motivate the construction of the initial metric, we invert the order of the steps, making explicit which conditions $g$ must satisfy before producing it. More precisely, Proposition \ref{prop:cohom1} below provides necessary and sufficient conditions to the existence of the desired metric with positive Ricci curvature. Theorem \ref{ithm:GZ} then follows verifying such conditions in a standard way. That said, the merit in our approach relies only on establishing such a proposition.
  
  In what follows, to emphasize the dependence of the orbit tensor along the points in $M$, we denote the orbit tensor at $\gamma(s)$ by $P_s$. 
  \begin{proposition}\label{prop:cohom1}
    Suppose that $(M,g)$ is a cohomogeneity one manifold whose orbits have finite fundamental group. Then $g_t$ has positive Ricci curvature for all large $t$ if and only if there is $c>0$ such that, for all $s\in (0,R)$,
    \[\frac{d^2}{ds^2}\tr P^{-1}_s\geq c.\] 
  \end{proposition}
  \begin{proof}
    We only prove the \textit{if} part. The converse can be proven in a similar way. 
     
    Consider the function $F(\overline{Y},t)=t\Ricci_{g_t}(\overline{Y})$
    where $\overline{Y}$ has unit $g$-norm (the $t$-factor is essential since the $\Ricci_{g_t}(\overline{Y})$ might decay.) Arguing by contradiction, we assume that for every $m \in \mathbb{N}$ there is $\overline{Y}_m$ such that $F(\overline{Y}_m,m)\leq 0$. By compactness, passing to a converging subsequence, we obtain a $g$-unit vector $\overline{Y}$ such that $\lim_{m\to\infty} F(\overline{Y},m)\leq 0$. 
     
    Denote by $p\in\gamma([0,R])$ the footpoint of $\overline{Y}$ and decompose $\overline{Y}=Y+V^*$ into its horizontal and vertical components. We first note that $p$ must lie in a regular orbit: recall that $G_{\gamma(0)},G_{\gamma(R)}$ act transitively on their respective horizontal spheres, as noted in \cite{gz,Grove2002}. Therefore, $d\rho(\lie g_p)Y\neq \{0\}$ whenever $p\in M\setminus M^{reg}$. Hence, Lemma \ref{lem:baseapropriada} and Proposition \ref{prop:z_tgoes} imply that $\Ricci_{g_t}(\overline{Y})$ would have an unbounded $z_t$-term whether $Y\neq 0$ (by summing it with any fake horizontal) or $V^*\neq 0$ (by summing it with $Y$). 
    Therefore, $p\in M^{reg}$, as claimed.
     
    Now, since $p$ lies in a regular orbit, $\Ricci_g^\cal H(Y)=0$ and conclude that $V^*=0$; otherwise we would have $\lim\limits_{m\to \infty} \Ricci_{g_m}(\overline{Y})\geq \frac{1}{4}\sum \|[v_i,V]\|^2_Q$, which is positive whenever $V^*\neq 0$.
  Therefore, the limit vector $Y$ is precisely the velocity $\dot \gamma = X.$ To finish the proof, observe that equation \eqref{eq:riccicurvature} gives
    \begin{multline}
      0\geq \lim_{m\to\infty}F(X,m)= \lim_{m\to \infty} \sum_{i=1}^k\frac{m}{1+m\lambda_i}\Big(\kappa_0(e_i,X)+z_m(e_i,X)\Big)  \nonumber\\
      =\sum_{i=1}^k\Big(\kappa_0({\lambda_i^{-1}}v_i^*,X)+3\|S_X({\lambda_i^{-1}}v_i^*)\|^2_g\Big)=\sum_{i=1}^k\Big(\kappa_0(P^{-1}_sv_i^*,X)+3\|S_X(P^{-1}_sv_i^*)\|^2_g \Big).
    \end{multline}
    This finishes the proof since \eqref{eq:Hess} gives
    \begin{equation*}\label{eq:dualhol}
    \kappa_0(P^{-1}_sv_i^*,X)+3\|S_X(P^{-1}_sv_i^*)\|^2_g=\frac{1}{2}\frac{d^2}{ds^2}Q(P^{-1}_sv_i,v_i) \geq c.\qedhere
    \end{equation*}
  \end{proof}
   
  We are now ready to prove Theorem \ref{ithm:GZ}.
   
  \begin{proof}[Proof of Theorem \ref{ithm:GZ}]
    Let $\pi:M\to [0,R]$ be the quotient projection and $\gamma:[0,R]\to M$ be a horizontal geodesic satisfying $\pi\circ \gamma(s)=s$. Set $\dot \gamma=X$ and recall that both $\lie m_{\gamma(s)}$ and $G_{\gamma(s)}$ do not depend on $s\in (0,R)$. Write $\lie m_{\gamma(R/2)}=\lie n$ and $G_{\gamma(R/2)}=H$. A \textit{diagonal metric} is a metric satisfying $P_s|_{\lie n_i}=f_i(s)^2\id$, where $\lie n=\oplus \lie n_i$ is an $Ad(H)$-invariant decomposition of $\lie n$. 
     
    Since (see for instance Lemma 3.3 in \cite{Grove2002})
    \[\frac{d^2}{ds^2}\tr P_s^{-1}= \sum \frac{d^2f_i^{-2}}{ds^2}= \sum -2\frac{f_i''}{f_i^3}+6\frac{(f_i')^2}{f_i^4},\]
    we conclude Theorem \ref{ithm:GZ} by showing that we can choose $\{f_i\}$ as a set of concave functions such that, for each $s\in[0,R]$, there is at least one index, say $i$, for which $f_i''<0$. 
    Such functions can be easily produced by imitating bundle metrics on the tubular neighborhoods of each singular orbit, then manipulating the behavior of the functions in the regular part.

    For instance, we choose the decomposition $\lie n=\lie n_-+\lie n_0+\lie n_R+\lie n_+$ where: $\lie n_-=\lie g_{\gamma(0)}\cap \lie g_{\gamma(R)}$ is composed of vectors whose action fields vanish at both $0$ and $R$; $\lie n_0=\lie g_{\gamma(0)}\cap (\lie n_-)^\bot$ (respectively, $\lie n_R=\lie g_{\gamma(R)}\cap (\lie n_-)^\bot$), composed of vectors whose action fields vanish only at $0$ (respectively, at $R$); $\lie n_+=\lie n \cap (\lie g_{\gamma(0)}+\lie g_{\gamma(R)})^\perp$ is composed of fields that never vanish. Instead of choosing one function for each $\lie n_i$, we choose $f_0,f_-,f_+,f_R$ relative to the new decomposition.
     
    Based on the geometry of disk bundles, see for instance the content of Lemma 3.3 in \cite{Grove2002}, it is known that there is some $A> 0$ for which we can impose that: $f_-,f_0$ agree with $A^{-1}\sin(At)$ near $0$; $f_-,f_R$ agree with $A^{-1}\sin(A(R-t))$ near $R$; and $f_i$ is constant near an extreme point where it does not vanish (see \cite{Grove2002}.) 
    Since we are assuming that both $\gamma(0),\gamma(R)$ are at the singular orbits, both $\lie n_0+\lie n_-,\lie n_R+\lie n_-$ are non-trivial, therefore, we guarantee that there are strictly convex functions around each singular orbit. 
    Fixing these conditions, it is straightforward to manipulate the functions $f_i$ in the interior of $(0,s)$ so that the desired concavity conditions hold. 
  \end{proof}
   
  \subsection{Non-abelian symmetry and positive scalar curvature}
   
  \label{sec:LawsonYau}
   
  Lawson and Yau in \cite{lawson-yau} prove that any Riemannian manifold $(M,g)$ endowed with the action of a non-Abelian compact connected Lie group $G$ has a metric of positive scalar curvature. The result was specially interesting at the time, since it guarantees both positive scalar curvature in a plethora of exotic spheres and lack of symmetry in the examples of exotic spheres without positive sectional curvature recently found by Hitchin \cite{hitchin1974harmonic}.
   
  The positively curved metric in \cite{lawson-yau} was obtained by reducing the group to $S^3$ (or $SO(3)$) and then proceeding in two parts: they considered a bundle-like metric in a compact subset of $M^{reg}$, with totally geodesic orbits, where they could freely apply the \textit{Canonical Variation} (see \cite[Example 2.1.1, p. 56]{gw}); and then constructed fixed metrics around the singular strata, considering delicate estimates independent of the canonical variation. In particular, during the procedure, the symmetry group is reduced to either $S^3$ or $SO(3)$. 
  To our fortune, scalar curvature is much more susceptible to become positive through Cheeger deformation than Ricci or sectional curvature. Indeed, the blow-up of $z_t$ and the last term in \eqref{eq:scalarcurvature} guarantee positive scalar curvature in a straightforward manner, as long as $\lie g$ is non-abelian\footnote{C. Searle and F. Wilhelm were already aware that Cheeger deformation yields this result.}.

  As a result, we provide a simpler proof of the result of Lawson and Yau.

  \begin{proof}[Proof of Theorem \ref{ithm:LW}]
  Assume by contradiction that there is a sequence of points $p_m\in M$ such that $\scal_{g_t}(p_m)\leq 0$. By compactness, we conclude that there is a point $p\in M$ such that 
  \[\lim_{t\to \infty}\scal_{g_t}(p)\leq 0.\] 
    On the one hand, $p$ cannot be regular since, in this case, there is $c>0$ such that $\lambda_i>c$ for every $i$. Therefore, 
      \[\frac{\lambda_i\lambda_jt^3}{(1+t\lambda_i)(1+t\lambda_j)}\|[v_i,v_j]\|_Q^2 \geq t\frac{\|[v_i,v_j]\|_Q^2}{(\frac{1}{c}+1)^2}.\]
    Assuming $\lie g$ non-Abelian, there is some pair $i,j$ such that $[v_i,v_j]\neq 0$. Since $\sum_{i,j=1}^n\kappa_0(C_t^{1/2}e_i,C^{1/2}_te_j)$ is bounded and $z_t$ is non-negative, by recalling equation \eqref{eq:scalarcurvature}, we conclude that one can take $t$ large enough so that $\scal_{g_t}(p)$ is positive.
     
    On the other hand, $p$ cannot be in a singular orbit: if $p\in M\setminus M^{reg}$, $\dim\lie g_p>0$ and there is $U$ such that $U^*$ is not the zero field but $U^*(p)=0$. We claim that $\tilde S_{\cal H_p}U\neq\{0\}$, thus Proposition \ref{prop:z_tgoes} and the arguments in the last paragraph shows that $\scal_{g_t}(p)$ is arbitrarily large as $t\to \infty$, a contradiction to the existence of the sequence $p_m$.
     
    To prove $\tilde S_{\cal H_p}U\neq\{0\}$, consider a $g$-unit horizontal vector $X$ such that $\gamma(s)=\exp_p(sX)$ is in $M^{reg}$ for some interval $(0,\epsilon)$. Since $U^*|_{\gamma}$ is a non-zero Jacobi field and $U^*(p)=0$, $\nabla_XU^*(p)=\tilde S_XU\neq 0$, as desired. 
  \end{proof}

  \section{The Ricci tensor on fixed axes}\label{sec:proofmainintro}

  Throughout this section we consider compact $(M,g)$ and $G$ also satisfying the hypotheses of Theorem \ref{ithm:searleintro}. Our main goal now is to prove Theorem \ref{ithm:mainitro}, besides giving some concrete algebraic conditions for the existence of metrics whose no Cheeger deformation lifts positive Ricci curvature. These examples are presented in Section \ref{sec:examples}. Also see Proposition \ref{prop:trace.to.Ricci} for a local description of the metrics to be produced.
   
  We start by showing that the only obstruction for lifting positive Ricci curvature is the term $\Ricci^{\cal H}_g$ at \textit{fixed axes} (see definition \ref{def:fixedaxis} below. The term $\Ricci^{\cal H}_g$ first appears in equation \eqref{eq:riccicurvature} in this paper). This emphasizes the need for using a conformal change on the metric to guarantee positive Ricci curvature everywhere. We soon shall see that the irreducibility of the isotropy representation for every orbit at the singular strata implies that Cheeger deformations do work on lifting positive Ricci curvature from the orbit space. Hence, in Section \ref{sec:combinatorial} we shall study some combinatorial and algebraic descriptions of the Ricci tensor on the fixed axes to provide a general picture of whether Cheeger deformations can be used as a single tool to produce positive Ricci curvature.
 
  Though such algebraic characterization proves to be very useful in providing the needed examples to complete Theorem \ref{ithm:mainitro}, it is natural to translate it in terms of some geometric data. This is the content presented in Section \ref{sec:recognizingeometric}.
  
  \begin{definition}\label{def:fixedaxis}
    Let $q\in M\setminus M^{reg}$. We say that a vector $X\in \cal H_q$ is a \emph{fixed axis} if it is fixed by $\rho(G^0_q)$, or equivalently, if $\tilde S_X=0$.
  \end{definition}
  \begin{remark}
  \begin{itemize}
      \item Since fixed axes are only defined at points in a singular orbit, from now on it is implicit we are considering only such points.
      \item The complete failure of Cheeger deformations for lifting positive Ricci curvature, as we shall see, is completely characterized by the isotropy representation action, parametrizing the so-called fake horizontal vectors. The equivalent definition of fixed axes, provided in Definition \ref{def:fixedaxis}, regarding $\tilde S_X$, provides that only the infinitesimal aspect of the isotropy group plays some role, namely, only its Lie algebra. This justifies why we restrict the definition of fixed axes only at the identity connected component of each isotropy subgroup.
  \end{itemize}
  \end{remark}
   
  \begin{proposition}\label{prop:valenada}
    Let $X \in \cal H_q$ be a fixed axis. 
    Then for every $Y \in \cal H_q$ one has that
    \begin{equation}
      z_t(X,Y) = 0, ~\forall t.
    \end{equation}
    In particular, $\lim\limits_{t\to \infty}\Ricci_{g_t}(X)= \Ricci_g^\cal H(X)$.
  \end{proposition}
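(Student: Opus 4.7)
\textit{Proof plan.} The plan is to apply Lemma \ref{lem:zt}: since both $X$ and $Y$ lie in $\cal H_p$, writing $\overline{X} = X$ and $\overline{Y} = Y$ gives $U = V = 0$, so the bracket $\tfrac{t}{2}Q([PU, PV], Z)$ in the numerator vanishes identically and the characterization of $z_t$ collapses to
\[
z_t(X, Y) = 3t \max_{Z \in \lie g,\ \|Z\|_Q = 1} \frac{dw_Z(X, Y)^2}{t\, g(Z^*, Z^*) + 1}.
\]
Thus it suffices to prove $dw_Z(X, Y)|_p = 0$ for every $Z \in \lie g$. Writing $w_Z = \tfrac{1}{2} g(\cdot, Z^*)$, expanding the exterior derivative on arbitrary smooth extensions of $X, Y$ and using the pointwise skew-symmetry of $\nabla Z^*$ (because $Z^*$ is Killing), the connection and bracket contributions cancel via the torsion-free identity, leaving
\[
dw_Z(X, Y)|_p = g(Y, \nabla_X Z^*|_p).
\]

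Next I would split $Z = Z_1 + Z_2$ with $Z_1 \in \lie g_p$ and $Z_2 \in \lie m_p$. For $Z_1$, the identity $\nabla_X Z_1^*|_p = d\rho(Z_1)X$ noted after Lemma \ref{lem:S_X} annihilates $X$ because $X$ is a fixed axis, i.e., $d\rho(\lie g_p)X = 0$. For $Z_2$ the goal is to show that $\nabla_X Z_2^*|_p$ is vertical. The central observation is that every $\phi_g$, $g \in G_p^0$, is an isometry fixing $p$ and satisfying $d\phi_g X = X$, so $\phi_g \circ \gamma = \gamma$ pointwise for $\gamma(s) = \exp_p(sX)$; in particular $\lie g_p \subseteq \lie g_{\gamma(s)}$ for every $s$ and $\gamma$ is contained in the fixed locus $\mathrm{Fix}(G_p^0)$. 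Combining this rigidity with the $G_p^0$-equivariance of the linear map $Z \mapsto \nabla_X Z^*|_p$ and with the Killing skew-symmetry $g(\nabla_X Z_2^*|_p, Y) = -g(\nabla_Y Z_2^*|_p, X)$, one forces the horizontal part of $\nabla_X Z_2^*|_p$ to vanish.

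The ``in particular'' statement then follows from Lemma \ref{lem:baseapropriada} applied to $\overline{X} = X$: the term $\tfrac{1}{4}\sum_j \|[v_j, U]\|_Q^2$ drops out since $U = 0$, and each summand $z_n(X, C_t^{1/2} e_i)$ vanishes, using the first conclusion for horizontal $e_i$ and the identity $dw_Z(X, V^*)|_p = g(V^*, \nabla_X Z^*|_p) = 0$ for vertical $e_i$ once $\nabla_X Z^*|_p$ has been shown to be vertical. The hard part will be the $Z_2$ step: $G_p^0$-equivariance alone cannot rule out a horizontal component of $\nabla_X Z_2^*|_p$ lying in a non-trivial isotypic summand of $\cal H_p$ paired with one in $\lie m_p$, so the cancellation must be genuinely extracted from the geometric rigidity of $\gamma \subset \mathrm{Fix}(G_p^0)$ played against the Killing equation.
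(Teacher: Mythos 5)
Your reduction is exactly the paper's: with $U=V=0$, Lemma \ref{lem:zt} leaves only the numerator $dw_Z(X,Y)$, the Killing identity gives $dw_Z(X,Y)=g(\nabla_XZ^*,Y)$, and for $Z\in\lie g_p$ this vanishes because $\nabla_XZ^*|_p=d\rho(Z)X=0$ when $X$ is a fixed axis. The genuine gap is precisely where you locate it, namely the directions $Z\in\lie m_p$: you assert that $G_p^0$-equivariance of $Z\mapsto\nabla_XZ^*|_p$, the Killing skew-symmetry, and the rigidity $\gamma\subset\mathrm{Fix}(G_p^0)$ ``force'' the horizontal part of $\nabla_XZ^*|_p$ to vanish, but no argument is given, and your closing sentence concedes you do not have one. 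In fact those ingredients alone cannot force it. Consider the round $\bb S^5\subset\bb C^3$ with $T^2$ acting by $e^{i\theta}$ on $z_1$ and by the Hopf circle $e^{i\phi}$ on $(z_2,z_3)$, and take $p=(0,z_2,z_3)$: then $G_p^0$ is the first circle, $\lie m_p$ is spanned by the Hopf generator $Z$, $Z^*(z)=(0,iz_2,iz_3)$, $\cal H_p=\bb C e_1\oplus \bb C w$ with $w=(0,-\bar z_3,\bar z_2)$, and every $X\in\bb C w$ is a fixed axis; a direct computation gives $\nabla_XZ^*|_p=iX$, which is horizontal and nonzero, so $dw_Z(X,iX)\neq 0$ and $z_t(X,iX)>0$. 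This action has toral principal orbits, hence violates the standing hypothesis (SW1) of the section and does not contradict the proposition in the paper's context; but it shows that ``fixed axis plus equivariance plus the Killing equation,'' which is all your sketch invokes, cannot settle the $\lie m_p$-case, so your plan cannot be completed without a genuinely new input. (For comparison, the paper disposes of all $Z$ at once by asserting $\tilde S_X=0$, which is immediate only on $\lie g_p$; your scruple about the complementary directions is well placed, but a complete proof must still close it, for instance by exploiting the hypotheses in force in that section.)

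A secondary slip occurs in the ``in particular'' step: you claim $z_t(X,C_t^{1/2}e_i)$ vanishes for vertical $e_i$ because $dw_Z(X,V^*)=g(\nabla_XZ^*,V^*)=0$ once $\nabla_XZ^*|_p$ is vertical. Verticality gives no such thing, since $V^*$ is itself vertical; indeed $dw_Z(X,Z^*)=\tfrac12 Xg(Z^*,Z^*)$ is in general nonzero at a singular point, where the orbits change size in the direction $X$. What the limit actually uses is that $C_t^{1/2}e_i\to 0$ for $i\leq k$ while $\sup_t z_t(X,e_i)<\infty$; the latter only needs $dw_Z(X,e_i)=0$ for $Z\in\lie g_p$ (the locus where $g(Z^*,Z^*)(p)=0$), which is again the fixed-axis condition, as in the remark following Corollary \ref{cor:ricH}.
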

  \begin{proof}
    According to equation \eqref{eq:z_t-equation} it suffices to prove that $dw_Z(X,Y) = 0$ for every $Z \in \lie g.$ To do so, we use the definition of the exterior derivative also recalling that the isometric action field $Z^*$ is a Killing vector field:
    \begin{align*}
      2dw_Z(&X,Y) = Xg(Z^*,Y) - Yg(Z^*,X) - g([X,Y],Z^*)\\
      &= g(\nabla_XZ^*,Y) + g(Z^*,\nabla_XY) - g(\nabla_YZ^*,X) - g(Z^*,\nabla_YX) - g([X,Y],Z^*)\\
      &= 2g(\nabla_XZ^*,Y) + g(Z^*,[X,Y]) -g([X,Y],Z^*)\\
      &= 2g(\tilde S_XZ,Y)\\
      &= 0.
    \end{align*}
    To conclude the limit, given $V\in\lie g$, extend $X$ and $V^*$ by the respective derivatives of $\varphi(t,s)=e^{tV}\exp_p(sX)$, so that $X$ is a horizontal field and $[X,V^*]=0$.
    We have,
    \begin{align*}
     2d\omega_Z(X,V^*)&= Xg(V^*,Z^*)-V^*g(X,Z^*)-g([X,V^*],Z^*)\\
     &=g(\nabla_XV^*,Z^*)+g(V^*,\nabla_XZ^*)\\
     &=-2g(X,\nabla_{V^*}Z^*)-g(X,[V^*,Z^*])\\
     &=-2g(X,\sigma(V^*,Z^*)),
    \end{align*} 
    where $\sigma$ is the second fundamental form of the orbit. The limit now follows from Lemma \ref{lem:baseapropriada} since both $R_g$ and $\sigma$ are bounded and $C_t|_{\cal V}\to 0$.
  \end{proof}
   
  Combining Proposition \ref{prop:valenada}, Lemma \ref{lem:regularconvergence} and Theorem \ref{thm:sufficient} we get:
   
  \begin{corollary}\label{cor:ricH}
    Let $(M,g)$ and $G$ satisfy the hypotheses $(SW1)$-$(SW2)$ of Theorem \ref{ithm:searleintro}.
    Then $(M,g_t)$ has positive Ricci curvature for every sufficiently large $t > 0$ if, and only if, $\Ricci^\cal H_g(X)>0$ for every $X\in \cal H\setminus\{0\}$.
  \end{corollary} 
   
  The proof of Theorem \ref{ithm:mainitro} follows from Corollary \ref{cor:ricH}. We shall use it to produce the mentioned examples of Riemannian manifolds satisfying the hypotheses of Searle--Wilhelm's Theorem \ref{ithm:searleintro} but which do not develop positive Ricci curvature after any Cheeger deformation. However, to proceed we need to furnish some algebraic description of the horizontal Ricci curvature on fixed axes, which justifies the forthcoming discussion.
   
  So let $X\in \cal H_q$ be a fixed axis and consider the operator $R_X := R_g(\cdot,X)X$. Given $\overline{Y}\in T_qM$, we recall that
  \begin{equation}\label{eq:forschur}
    \tilde \rho(r) R_X(\overline Y) = R_g(\tilde \rho(r)\overline{Y},\tilde \rho(r)X)\tilde \rho(r)X=R_g(\tilde \rho(r)\overline{Y},X)X=R_X(\tilde \rho(r)\overline Y), 
  \end{equation}
  for all $r\in G_q$, where $\tilde \rho:G_q\to O(T_qM)$ is the full isotropy representation (not its restriction to the horizontal space). Schur's Lemma (\cite[p. 13, Proposition 4]{scott1996linear}) then implies that $R_X$ is block diagonal on each irreducible subspace of $\tilde \rho$. We state this as a lemma:
  \begin{lemma}
    \label{lem:Schur} Let $V\subset T_qM$ be $\tilde\rho$-irreducible. Then $R_X|_{V}$ is a multiple of the identity.
  \end{lemma}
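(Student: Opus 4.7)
The statement is essentially an invocation of Schur's lemma, with the mild care needed because we are dealing with real (rather than complex) representations. The plan is to combine the intertwining identity in equation \eqref{eq:forschur} with the self-adjointness of $R_X$ and argue via eigenspaces.

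First I would observe that $R_X$ is a self-adjoint endomorphism of $T_pM$, that is $g(R_X \overline Y,\overline Z)=g(\overline Y,R_X \overline Z)$, which is immediate from the usual symmetries $R(\overline Y,X,X,\overline Z)=R(\overline Z,X,X,\overline Y)$ of the Riemann tensor. In particular $R_X$ has only real eigenvalues and admits an orthogonal eigenspace decomposition. Next, since $X$ is a fixed axis, $\tilde\rho(r)X=X$ for every $r\in G_p$, and equation \eqref{eq:forschur} already shows that $R_X\circ\tilde\rho(r)=\tilde\rho(r)\circ R_X$ for all $r\in G_p$; in particular, $R_X$ preserves the $\tilde\rho$-invariant subspace $V$.

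Now I would fix $\lambda\in\mathbb R$ an eigenvalue of $R_X|_V$ and set $V_\lambda=\ker(R_X|_V-\lambda\,\mathrm{id})\subset V$. Because $R_X$ commutes with every $\tilde\rho(r)$, each eigenspace $V_\lambda$ is $\tilde\rho$-invariant. Since $V$ is $\tilde\rho$-irreducible and $V_\lambda$ is a nontrivial $\tilde\rho$-invariant subspace of $V$, we must have $V_\lambda=V$. Hence $R_X|_V=\lambda\,\mathrm{id}_V$, which is the claim.

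The only genuinely delicate point is the one I just finessed: for a real representation, commuting endomorphisms need not be scalar (they may span a division algebra $\mathbb R$, $\mathbb C$ or $\mathbb H$), so one cannot simply quote complex Schur. The self-adjointness of $R_X$ is what rescues us, since it forces at least one real eigenvalue and gives the invariant eigenspace argument above. I would state this caveat explicitly in the proof so the reader is not misled into thinking the statement holds for arbitrary intertwiners of real irreducibles.
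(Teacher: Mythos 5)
Your argument is correct and is essentially the paper's own: the paper simply combines the commutation relation \eqref{eq:forschur} with a citation of Schur's Lemma, exactly the route you take. Your additional care about the real-representation subtlety (using self-adjointness of $R_X$ to get a real eigenvalue and then running the invariant-eigenspace argument) is a sound and welcome fleshing-out of what the paper leaves to the reference, and it implicitly uses, as the paper's statement also does, that $R_X$ maps $V$ into itself.
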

   
  Now since $\cal H_q$ is $\tilde \rho$-invariant one gets that $R_X(\cal H_q)\subseteq \cal H_q$. Moreover, once $R_X(X)=0$ it follows that $\cal H_q\cap {\mathrm{span}\{X\}}^{\perp}$ is $R_X$-invariant. From now on, unless otherwise stated, we abuse notation and denote by $R_X$ the restriction $R_X:\cal H_q\cap {\mathrm{span}\{X\}}^{\perp}\to \cal H_q\cap {\mathrm{span}\{X\}}^{\perp}$. In this notation, we benefit from the useful expression $\Ricci^\cal H_g(X)=\tr R_X$.
   
  Although Corollary \ref{cor:ricH} guarantees that $g$ fails to develop positive Ricci curvature after any finite Cheeger deformation if there exists a fixed axis $X$ such that $\Ricci^\cal H_g(X)<0$, and it is reasonably simple to produce an arbitrary metric such that $\Ricci^\cal H_g(X)<0$, condition $(SW3)$ does impose restrictions to $R_X$. 
  
  More precisely: let $l$ be the codimension of a regular orbit. Recalling that $q\in M\setminus M^{reg}$, one has that $\dim \cal H_q > l.$ Now, since $\lim_{t\to \infty} \Ricci_{g_t}(X')\geq1$ for every $X'\in\cal H$ in the regular part (see Lemma \ref{lem:regularvector}), whenever $\cal W\subseteq \cal H_q$ is a $l$-dimensional subspace which is the limit of horizontal subspaces on the regular stratum, one gets
  \begin{equation}\label{eq:conditionW}
    \Ricci_g^\cal W(X) := \sum_{i=1}^{l-1}R_g(X,e_i,e_i,X)=\lim_{t\to \infty}\sum_{i=1}^{l-1}R_{g_t}(X,e_i,e_i,X)\geq 1,
  \end{equation}
  where $\{e_0=X,e_1,...,e_{l-1}\}$ is an orthonormal basis for $\cal W$. The last equality follows since $z_t(X,e_i)=0$ for all $i$ (Proposition \ref{prop:valenada}). On the other hand, the set of such $\cal W$'s can be restricted enough so that both \eqref{eq:conditionW} and $\Ricci_g^\cal H(X)<0$ hold together.
   
 To understand the former constrained relations, let $c(s)$ be a smooth curve with $c(0)=q$ such that $c(s)\in M^{reg}$ for $s>0$. For every $s$, $\cal H_{c(s)}$ defines a curve in $\mathrm{Gr}_l(TM)$. Moreover, any limit subspace $\cal W$ of the curve $\cal H_{c(s)}\in \mathrm{Gr}_l(TM)$ as $s\to0$ must satisfy \eqref{eq:conditionW}. We conclude that every limit of a sequence $\cal H_{p_i}$, $p_i\in M^{reg}$, arises as the limit of the horizontal space along a curve and we call it as a \textit{limit horizontal space}. 
   
  Denote the set of all limit horizontal spaces at $q$ as $\widetilde{\cal W}_q$. In the next result, we give an algebraic description of such spaces.
   
  \begin{lemma}\label{lem:regularvector}
    Let $\cal W\in \widetilde{\cal W}_q$. Then there is $Y\in \cal H_q$ such that $\cal W=(\tilde S_Y\lie g_q)^{\bot}=(d\rho(\lie g_q)Y)^\bot$.
  \end{lemma}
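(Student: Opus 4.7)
The plan is to realize $\cal W$ as the Grassmannian boundary value of the horizontal distribution along a horizontal geodesic emanating from $p$, so that Lemma~\ref{lem:S_X} applies. By definition, $\cal W=\lim_{s\to 0}\cal H_{c(s)}$ for some smooth curve $c$ with $c(0)=p$ and $c(s)\in M^{reg}$ for $s>0$. Using the slice theorem to write $c(s)=g_s\cdot\exp_p(Y_s)$ and passing to a subsequence with $g_s\to g_0\in G_p$ (then replacing $c$ by the isometric curve $g_0^{-1}c$, whose limiting horizontal space is $(dg_0^{-1})\cal W$ and for which the conclusion transports back by $\rho$-equivariance), I first reduce to $c(s)=\exp_p(Y_s)$ with $Y_s\in\cal H_p$, $Y_s\to 0$, and each $Y_s$ being a $\rho$-regular vector in $\cal H_p$.

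Next, I characterize $\cal W=(\lim_s\cal V_{c(s)})^\bot\cap T_pM$ and compute $\lim_s\cal V_{c(s)}$ for $\cal V_{c(s)}=\operatorname{span}\{U^*(c(s)):U\in\lie g\}$. The crucial input is that for $U\in\lie g_p$ the Killing identity $\nabla^2_{X,X}U^*(p)=-R(U^*(p),X)X=0$ (since $U^*(p)=0$) gives the Taylor expansion $U^*(\exp_p(Y))=d\rho(U)Y+O(\|Y\|^3)$. Setting $\hat Y_s:=Y_s/\|Y_s\|$ and passing to a subsequence with $\hat Y_s\to\hat Y$ on the unit sphere of $\cal H_p$, one obtains $U^*(c(s))\to U^*(p)\in\cal V_p$ for $U\in\lie m_p$, and $U^*(c(s))/\|Y_s\|\to d\rho(U)\hat Y$ for $U\in\lie g_p$. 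In the favorable case when $\hat Y$ is itself $\rho$-regular, a dimension count yields $\lim\cal V_{c(s)}=\cal V_p\oplus\tilde S_{\hat Y}\lie g_p$, and since $\tilde S_{\hat Y}\lie g_p\subset\cal H_p$ is orthogonal to $\cal V_p$, taking complements gives $\cal W=(\tilde S_{\hat Y}\lie g_p)^\bot$, and I set $Y=\hat Y$.

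If $\hat Y$ is not $\rho$-regular, I iterate. The renormalized secondary direction $\hat Y_s^{(1)}:=(\hat Y_s-\hat Y)/\|\hat Y_s-\hat Y\|$ admits, after a further subsequence, a limit $\hat Y^{(1)}\in\hat Y^\bot\cap\cal H_p$; for $U\in\lie g_{\hat Y}$ the expansion above yields $U^*(c(s))/(\|Y_s\|\cdot\|\hat Y_s-\hat Y\|)\to d\rho(U)\hat Y^{(1)}$, contributing a further block to $\lim_s\cal V_{c(s)}$. The iteration terminates in at most $\dim\lie g_p-\dim\lie h$ steps ($\lie h$ the principal isotropy Lie algebra) and produces $\hat Y^{(0)}=\hat Y,\hat Y^{(1)},\ldots,\hat Y^{(m)}$ whose joint $\rho$-stabilizer equals $\lie h$. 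Setting
\[
Y:=\hat Y^{(0)}+\delta_1\hat Y^{(1)}+\cdots+\delta_m\hat Y^{(m)}
\]
for generic $0<\delta_1\ll\cdots\ll\delta_m$, one checks $\lie g_Y=\bigcap_i\lie g_{\hat Y^{(i)}}=\lie h$, so $Y$ is $\rho$-regular; a block-triangular analysis of $U\mapsto d\rho(U)Y$ along the iterated-stabilizer filtration $\lie g_p\supset\lie g_{\hat Y}\supset\lie g_{\hat Y,\hat Y^{(1)}}\supset\cdots\supset\lie h$ identifies $d\rho(\lie g_p)Y$ with $\cal W^\bot\cap\cal H_p$, yielding $\cal W=(\tilde S_Y\lie g_p)^\bot$.

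The main obstacle is this final identification in the non-regular case: the block-triangular hierarchy imposed by the choice $\delta_1\ll\cdots\ll\delta_m$ must match exactly, not merely asymptotically, the hierarchy of scalings $\|Y_s\|\ll\|Y_s\|\cdot\|\hat Y_s-\hat Y\|\ll\cdots$ that governs $\lim_s\cal V_{c(s)}$ in the Grassmannian, and one must verify that the $O(\|Y\|^3)$ curvature corrections in the Taylor expansion do not obstruct the exact equality $d\rho(\lie g_p)Y=\cal W^\bot\cap\cal H_p$.
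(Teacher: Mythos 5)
Your ``favorable case'' is, up to packaging, the paper's entire proof. The paper takes the curve $c(s)$, renormalizes the action fields of a basis $\{v_1,\dots,v_d\}$ of $\lie g_p\cap\lie m_{c(\epsilon)}$ by $1/s$, uses $\lim_{s\to 0}\tfrac1s v_i^*(c(s))=\nabla_{\dot c(0)}v_i^*=\tilde S_{\dot c(0)}v_i$, and concludes with $Y=\dot c(0)$; it neither passes to geodesics via the slice theorem nor Taylor-expands, and it does not treat the degenerate case at all -- it simply asserts that $\{v_1,\dots,v_d\}$ spans $\lie p_{\dot c(0)}$, which is precisely the hypothesis that the initial direction is generic (your ``$\hat Y$ is $\rho$-regular''). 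So your first two paragraphs reproduce the published argument, just with the first-order identity $d\rho(U)X=\nabla_XU^*(0)$ spelled out through the Killing/Jacobi expansion.

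The third paragraph is where your proposal stops being a proof, and you say so yourself. The difficulty you flag is genuine: along a curve whose first-order direction $\hat Y^{(0)}$ is non-regular, the limit of the vertical spaces is the multi-scale space $\cal V_p\oplus d\rho(\lie g_p)\hat Y^{(0)}\oplus d\rho(\lie g_{\hat Y^{(0)}})\hat Y^{(1)}\oplus\cdots$, while for a fixed perturbation $Y=\hat Y^{(0)}+\delta_1\hat Y^{(1)}+\cdots$ the space $d\rho(\lie g_p)Y$ has the right dimension but in general only converges to that multi-scale space as $\delta_i\to 0$: for $U\notin\lie g_{\hat Y^{(0)}}$ the term $\delta_1 d\rho(U)\hat Y^{(1)}$ need not lie in $d\rho(\lie g_p)\hat Y^{(0)}\oplus d\rho(\lie g_{\hat Y^{(0)}})\hat Y^{(1)}\oplus\cdots$, so the exact identity $\cal W=(d\rho(\lie g_p)Y)^\perp$ does not follow from the block-triangular heuristic. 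To close this you would either have to show that such degenerate limits can be discarded (e.g.\ reduce to curves with $\rho$-regular initial velocity, which is enough for the trace estimates the lemma feeds into, since $\tr(p_1|_{\cal W})$ is continuous on the Grassmannian and infima pass to closures), or give a different argument for the exact equality. As written, that branch is a gap -- one which the paper's own one-line proof also leaves unaddressed, but a gap nonetheless; your generic-case argument, like the paper's, is fine.
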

  \begin{proof}
    Let $c(s)$ be a smooth curve, $c(0,\epsilon)\subseteq M^{reg}$, such that $\cal W$ is the limit of $\cal H_{c(s)}.$ If $\epsilon > 0$ is sufficiently small, according to Lemma \ref{lem:S_X}, a basis for $\cal H_{c(s)}$ can be taken as
    $\{\frac{1}{s}v_1^*,\ldots,\frac{1}{s}v^*_d,v^*_{d+1},\ldots,v^*_k\},$ where $\{v_1,\ldots,v_k\}$ is a basis for $\lie m_{c(\epsilon)},~v_1^*,\ldots,v_d^*\in \lie g_q$ and $v_{d+1}^*,\ldots, v^*_k$ are $Q$-orthogonal vectors to $\lie g_p.$ Since $v_i^*(c(s))\to 0$ for all $i\leq d$, it follows that 
    \[\lim_{s\to 0^+}\frac{1}{s}v_i^*(c(s)) = \nabla_{c'(0)}v_i^* = \tilde S_{c'(0)}v_i^*.\]
    The result then follows since the set $\{v_1,\ldots,v_d\}$ must span $\lie p_X.$
  \end{proof}
  
  Lemma \ref{lem:regularvector} and its proof motivates the following definition, needed to understand Theorem \ref{thm:technical}.
  \begin{definition}\label{def:regularvectors}
    A non-zero vector $Y\in \cal H_q$ which generates a subspace $\cal W$ such as in Lemma \ref{lem:regularvector} is named a \textit{regular vector} with respect to the isotropy representation $\rho$. We denote the set of these vectors by $\cal R.$
  \end{definition}
   
   Lemma \ref{lem:regularvector} implies that, since $X$ is a fixed axis (and hence $d\rho(\lie g_q)X=0$), it holds that $X\in \cal W$ for all $\cal W\in\widetilde{\cal W}_q$. Gathering all the previous discussion, to prove Theorem \ref{ithm:mainitro}, and hence, better understanding when Cheeger deformations are the only tool needed to lift positive Ricci curvature from orbit spaces, we are interested in sets of vectors fulfilling the following requirements:
  \begin{enumerate}[(a)]
    \item \label{eq:a} $\Ricci^\cal H_g(X)<0;$
    \item  \label{eq:b} $\Ricci_g^\cal W(X)\geq 1$ for all $\cal W\in\widetilde{\cal W}_q.$ 
  \end{enumerate}
  Indeed, condition \eqref{eq:a} obstructs the lift of positive Ricci while \eqref{eq:b} is necessary to fulfill condition $(SW3)$ in Theorem \ref{ithm:searleintro}.

  Now, observe that if $\cal H_q\cap {\mathrm{span}\{X\}}^{\perp}$ is $\rho$-irreducible, then $R_X$ is diagonal and hence, condition \eqref{eq:b} implies that $\Ricci^{\cal H_q}(X) > 0$. This verifies item 2. of Theorem \ref{ithm:obstruction}. For this reason, the next subsection is solely devoted to understanding the Ricci tensor evaluated in fixed axes based at points where the isotropy representation is reducible.
   
  \subsection{An algebraic description of the Ricci curvature}
  \label{sec:combinatorial}
   
  Here we present a combinatorial description of conditions \eqref{eq:a} and \eqref{eq:b}. Specifically, we search for algebraic conditions for the existence of a symmetric operator $R_X$ satisfying \eqref{eq:a} and \eqref{eq:b}. A metric realizing $R_X$ is then constructed \textit{a posteriori}. For clarity sake, we first assume that $\cal H_q=\mathrm{span}\{X\}\oplus\cal H_1\oplus\cal H_2$, where the $\cal H_i$ are $\rho$-irreducible subspaces. Lemma \ref{q:solution} provides an easily computable condition for $R_X$ with exactly two distinct eigenvalues $\lambda_1,\lambda_2$.

  \begin{remark}
    \begin{itemize}~
      \item Although the space $\cal H_q$ is metric dependent, the linear action of $G_q$ in $\cal H_q$ is equivalent to the classical isotropic representation of $G_q$ in the quotient $T_qM/T_qGq$. Therefore, all calculations can be done using a fixed complement of $T_qGq$. 
      \item As we shall see, the irreducibility of $\cal H_i$ is not needed to produce the examples in Theorem \ref{ithm:mainitro}.
    \end{itemize}
  \end{remark}

  Start by considering the complementary projections 
  \begin{equation*}\label{eq:pi}
    p_i : \cal H_q \to \cal H_i.
  \end{equation*}
  Then for any $G$-invariant metric $g$ and $Y\in\cal H_1\oplus\cal H_2$,
  \begin{equation*}
    R_X(Y) = \lambda_1\|p_1Y\|_g^2 + \lambda_2\|p_2Y\|_g^2.
  \end{equation*}
  In particular, $\Ricci^{\cal H}(X)=\lambda_1\dim \cal H_1 + \lambda_2 \dim \cal H_2$ and for each $\cal W\in\widetilde{\cal W}_q$ one has
  \begin{equation}\label{eq:riccisubspace}
    \Ricci^{\cal W}(X)_g = \sum_{i=1}^l\left(\lambda_1 \|p_1e_i\|_g^2 + \lambda_2 \|p_2e_i\|_g^2\right)=\lambda_1 \tr (p_1|_{\cal W})+\lambda_2\tr (p_2|_{\cal W}).
  \end{equation}
  \begin{remark}Although the definition of $\cal W=(d\rho(\lie g_q)Y)^\bot$ is metric dependent, the quantity
    \[\tr (p_1|_{\cal W})=\tr( p_1)-\tr (p_1|_{d\rho(\lie g_q)Y})\]
    is not, as a direct computation shows.
  \end{remark}

  The existence of $\lambda_1,\lambda_2$ satisfying \eqref{eq:a} and \eqref{eq:b} can be totally translated as properties of the set 
  \[\cal A=\{(\tr (p_1|_\cal W),\tr (p_2|_\cal W))\in \mathbb{R}^2~~~|~~~\cal W\in\widetilde{\cal W}_q\}:\]
  \begin{problem}
    \label{claim:1}
    Let $\cal A \subset \mathbb{R}^2$ be a given collection of pairs of real numbers satisfying $a + b = l-1 ~\forall (a,b) \in \cal A$. Find $\lambda_1,\lambda_2 \in \mathbb{R}$ such that
    \begin{equation}\label{eq:positivona}a\lambda_1 + b\lambda_2 \geq 1,~ \forall (a,b) \in \cal A\end{equation}
    and
    \begin{equation}\label{eq:negativona}\lambda_1\dim \cal H_1 + \lambda_2 \dim \cal H_2 < 0.\end{equation}
  \end{problem}
  Lemma \ref{q:solution} gives necessary and sufficient conditions to solve Problem \ref{claim:1}.
   
  \begin{lemma}\label{q:solution}
    Denote by $A := \dim \cal H_1$ and $B := \dim \cal H_2$. Problem \ref{claim:1} has an affirmative answer 
    if, and only if, either 
    \begin{equation}\label{eq:solution>0}
      \inf_{(a,b)\in\cal A} \{a\} > \frac{A(l-1)}{A+B},
    \end{equation} or 
    \begin{equation}\label{eq:solution<0}
      \inf_{(a,b)\in\cal A} \{b\} > \frac{B(l-1)}{A+B},
    \end{equation}
  \end{lemma}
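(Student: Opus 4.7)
The plan is to reduce Problem \ref{claim:1} to a one-variable linear programming question by exploiting the affine constraint $a+b=l-1$ on $\cal A$, which confines the data to a line in the $(a,b)$-plane. First I would rule out the degenerate case $\lambda_1=\lambda_2=\lambda$: then \eqref{eq:positivona} forces $(l-1)\lambda\geq 1$ while \eqref{eq:negativona} forces $(A+B)\lambda<0$, an incompatibility. So I split into the two genuine cases $\lambda_1>\lambda_2$ and $\lambda_1<\lambda_2$, which by the symmetry $(\lambda_1,\lambda_2,A,B)\leftrightarrow(\lambda_2,\lambda_1,B,A)$ (together with the natural swap of $\cal H_1$ and $\cal H_2$) are interchangeable; I focus on $\lambda_1>\lambda_2$.

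Setting $\alpha:=\inf\{a:(a,b)\in\cal A\}$ and substituting $b=l-1-a$, the function $(a,b)\mapsto a\lambda_1+b\lambda_2=a(\lambda_1-\lambda_2)+(l-1)\lambda_2$ is monotone increasing in $a$ when $\lambda_1>\lambda_2$, so condition \eqref{eq:positivona} over all of $\cal A$ collapses to the single closed inequality $\alpha\lambda_1+(l-1-\alpha)\lambda_2\geq 1$. Introducing $\mu:=\lambda_1-\lambda_2>0$, I need $\mu>0$ and $\lambda_2\in\mathbb{R}$ with
\[
(l-1)\lambda_2\geq 1-\alpha\mu \qquad\text{and}\qquad (A+B)\lambda_2<-A\mu.
\]
Since $l-1>0$ and $A+B>0$, eliminating $\lambda_2$ reduces the problem to finding $\mu>0$ satisfying
\[
A+B<\mu\bigl[\alpha(A+B)-A(l-1)\bigr],
\]
which is solvable precisely when $\alpha(A+B)>A(l-1)$, i.e., when \eqref{eq:solution>0} holds. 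By the aforementioned symmetry, the case $\lambda_1<\lambda_2$ yields \eqref{eq:solution<0}.

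For the converse, assuming \eqref{eq:solution>0} I take any $\mu$ exceeding $\tfrac{A+B}{\alpha(A+B)-A(l-1)}$ and any $\lambda_2$ in the then-nonempty interval $\bigl[\tfrac{1-\alpha\mu}{l-1},\,-\tfrac{A\mu}{A+B}\bigr)$, and set $\lambda_1:=\lambda_2+\mu$; by construction both \eqref{eq:positivona} and \eqref{eq:negativona} hold. The argument is essentially linear algebra once the monotonicity along the line $a+b=l-1$ is observed; the only mild subtlety is that the infimum $\alpha$ need not be attained, but this does not affect the reduction since \eqref{eq:positivona} is a closed condition and is preserved under taking the infimum of its left-hand side.
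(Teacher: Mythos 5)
Your proof is correct, and it reaches the criterion by a somewhat different organization than the paper's. The paper first asserts $\lambda_1\lambda_2<0$, normalizes so that $\lambda_1>0$, and then argues pointwise over $\cal A$ with the ratio $-\lambda_2/\lambda_1$: necessity uses $\frac{A}{B}<-\frac{\lambda_2}{\lambda_1}$ together with monotonicity of $x\mapsto \frac{x}{x+1}$ and an explicit $\epsilon$-slack, and sufficiency is obtained by choosing $-\lambda_2/\lambda_1$ in an explicit interval between $\frac{A}{B}$ and $\frac{a-\epsilon}{b}$. You instead split on the order of $\lambda_1,\lambda_2$ (after excluding $\lambda_1=\lambda_2$), collapse the whole family \eqref{eq:positivona} to the single closed inequality at $\alpha=\inf\{a\}$ via monotonicity along the line $a+b=l-1$, and then eliminate $\lambda_2$ and $\mu=\lambda_1-\lambda_2$ in Fourier--Motzkin fashion, reading off solvability as $\alpha(A+B)>A(l-1)$. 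This buys two small advantages: you never need the sign claim $\lambda_1\lambda_2<0$ (which implicitly uses $a,b\ge 0$, true in the geometric application but not stated in Problem \ref{claim:1}), and you treat the possibly non-attained infimum explicitly; the paper's version, in exchange, carries explicit constants that quantify the slack used in the final rescaling. The only point you leave tacit is finiteness of $\alpha$ in the case under consideration: this is automatic here since $a=\tr(p_1|_{\cal W})\in[0,l-1]$, and in the abstract setting $\alpha=-\infty$ makes the case $\lambda_1>\lambda_2$ unsolvable while \eqref{eq:solution>0} fails, so the stated equivalence persists.
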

  \begin{proof}
    From conditions \eqref{eq:positivona},\eqref{eq:negativona} it is clear that $\lambda_1\lambda_2<0$. Moreover, if $\lambda_1,\lambda_2$ gives a solution to Problem \ref{claim:1} then $-\lambda_1,-\lambda_2$ also gives a solution after interchanging the roles of $\cal H_1$ and $\cal H_2$. Since the two conditions stated in Lemma \ref{q:solution} only differ by the sign of $\lambda_1$, we can assume without loss of generality that $\lambda_1>0$ and prove \eqref{eq:solution>0}.

    To see that \eqref{eq:solution>0} is necessary, suppose that Problem \ref{claim:1} has an affirmative answer for $\lambda_1>0$. Let $(a,b)\in\cal A$. Equation \eqref{eq:positivona} gives $a > -\frac{\lambda_2}{\lambda_1}b + \frac{\epsilon}{\lambda_1}$ for some $0< \epsilon < 1$. Hence, since $a+b=l-1$, we have $a > (a-(l-1))\frac{\lambda_2}{\lambda_1} + \frac{\epsilon}{\lambda_1}$. Therefore,
    \[a > (l-1) \frac{-\frac{\lambda_2}{\lambda_1}}{( 1-\frac{\lambda_2}{\lambda_1})} + \frac{\epsilon}{\lambda_1 - \lambda_2}.\]
    On the other hand, equation \eqref{eq:negativona} gives $\frac{A}{B} < -\frac{\lambda_2}{\lambda_1}$. Since the function $f(x) = \frac{x}{x+1}$ is increasing in $]0, \infty[$ we conclude that 
    \[a > (l-1)\frac{\frac{A}{B}}{(\frac{A}{B} + 1)} + \frac{\epsilon}{\lambda_1-\lambda_2} = \frac{(l-1)A}{A+B} + \frac{\epsilon}{\lambda_1-\lambda_2},\] 
    for every $(a,b)\in\cal A$, proving condition \eqref{eq:solution>0}.
     
    Conversely, suppose that there is $\epsilon>0$ such that 
    \[a \geq \frac{A(l-1)+2\epsilon B}{A+B}\]
    for every $(a,b)\in\cal A$. Since $a+b=l-1$, we have $a(A+B)-2\epsilon B \geq A(a+b)$. Thus $\frac{a-2\epsilon}{b} \geq \frac{A}{B}$ whenever $b\neq 0$. 
    Choose $\lambda_1, \lambda_2$ such that $\lambda_1 > 0$ and $\frac{a-\epsilon}{b} \geq -\frac{\lambda_2}{\lambda_1} > \frac{A}{B}$. We obtain $0> \lambda_1A+\lambda_2B$ and $\lambda_1a +\lambda_2b\geq \epsilon$ for every $(a, b)\in\cal A$, $b\neq 0$. The result then follows a rescaling of $\lambda_1,\lambda_2$ since whenever $\lambda_1>0$ equation \eqref{eq:positivona} is automatically satisfied for $b=0$. 
  \end{proof}
   
  Having understood necessary and sufficient pointwise conditions to produce the needed counterexamples to Theorem \ref{ithm:mainitro}, we now construct a local Riemannian metric that fails to develop positive Ricci curvature after any finite Cheeger deformation:
  \begin{proposition}\label{prop:trace.to.Ricci}
    Suppose that $\cal H_q$ admits a $\rho$-invariant subspace $\cal H_1$ and let $X\notin \cal H_1$ be such that 
    \begin{equation}\label{eq:trace.to.Ricci}
      \inf_{\cal W\in\widetilde{\cal W}_q} \{\tr (p_1|_{\cal W})\} > \frac{(l-1)\dim \cal H_1}{\dim\cal H_q-1}.
    \end{equation}
    Then there is a $G$-invariant metric $g$ on a neighborhood $U$ of $q$ satisfying
    \begin{enumerate}[$1.$]
      \item $\Ricci_{U^{reg}/G}\geq 1$,
      \item $\Ricci_{g_t}(X)<0$ for every Cheeger deformation $g_t$ of $g$.
    \end{enumerate}
  \end{proposition}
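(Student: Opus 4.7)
My plan is to translate the trace hypothesis \eqref{eq:trace.to.Ricci} into the existence of two scalars $\lambda_1,\lambda_2$ that will serve as the eigenvalues of $R_X$ on two complementary $\rho$-invariant subspaces of $\cal H_p$, and then realize such a curvature operator as the actual curvature of a $G$-invariant metric on the slice. Set $\cal H_2 := (\operatorname{span}\{X\}\oplus \cal H_1)^\perp\cap \cal H_p$, which is $\rho$-invariant because $X$ is a fixed axis and $\cal H_1$ is $\rho$-invariant. Write $A=\dim\cal H_1$, $B=\dim\cal H_2$, so $A+B=\dim\cal H_p-1$. Since $X$ is fixed, the remark following Lemma \ref{lem:regularvector} gives $X\in\cal W$ for every $\cal W\in\tilde{\cal W}_p$, hence $a:=\tr(p_1|_{\cal W})$ and $b:=\tr(p_2|_{\cal W})$ satisfy $a+b=l-1$. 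Hypothesis \eqref{eq:trace.to.Ricci} is then precisely condition \eqref{eq:solution>0}, so Lemma \ref{q:solution} furnishes $\lambda_1>0$ and $\lambda_2<0$ with $\lambda_1 A+\lambda_2 B<0$ and $\lambda_1 a+\lambda_2 b\geq 1$ for every $(a,b)\in\cal A$.

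Next, I construct a $G$-invariant metric $g$ on a neighborhood $U$ of $Gp$. Via the slice theorem, $U$ is $G$-equivariantly diffeomorphic to an open set in the associated bundle $G\times_{G_p}\cal H_p$, so it suffices to specify a $G_p$-invariant metric on the slice $\cal H_p$ together with an $\Ad(G_p)$-invariant inner product on $\lie m_p$. For the slice I use a doubly-warped product adapted to the decomposition $\operatorname{span}\{X\}\oplus\cal H_1\oplus\cal H_2$, of the form $dt^2 + f_1(t)^2\langle dY_1,dY_1\rangle + f_2(t)^2\langle dY_2,dY_2\rangle$, where $t$ is the coordinate along $X$ and $\langle\cdot,\cdot\rangle$ are the flat inner products on $\cal H_i$ restricted from $\cal H_p$. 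This ansatz is automatically $G_p$-invariant because $G_p$ preserves each $\cal H_i$, and the standard warped-product formulas give $R_g(X,e_i,e_i,X)=-f_i''(0)/f_i(0)$ for $e_i\in\cal H_i$, a second-order prescription at $p$ readily solved by $f_i''(0)=-\lambda_i f_i(0)$. The orbit metric is taken induced from a biinvariant $Q$ on $G$, rescaled if necessary.

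With this metric, condition (2) follows from Proposition \ref{prop:valenada} and Lemma \ref{lem:baseapropriada}: since $X$ is fixed, $z_t(X,\cdot)\equiv 0$ and the $\|[v_j,U]\|_Q^2$ term in Lemma \ref{lem:baseapropriada} vanishes because $X$ is horizontal, so
\[\lim_{t\to\infty}\Ricci_{g_t}(X)=\Ricci_g^{\cal H}(X)=\lambda_1 A+\lambda_2 B<0;\]
equation \eqref{eq:riccicurvature} then forces $\Ricci_{g_t}(X)<0$ for every $t$ after suitable rescaling of the finite-$t$ contributions. For condition (1), Claim \ref{claim:reg} together with \eqref{eq:riccisubspace} yield, for any sequence of regular points approaching the singular stratum with limiting horizontal subspace $\cal W$,
\[\Ricci_{U^{reg}/G}(d\pi X')=\lim_{t\to\infty}\Ricci_{g_t}(X')=\lambda_1\tr(p_1|_{\cal W})+\lambda_2\tr(p_2|_{\cal W})\geq 1\]
by the choice of $\lambda_i$, and continuity extends the bound uniformly to $U^{reg}/G$. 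The main obstacle is the metric construction itself: the warped-product ansatz prescribes the curvature operator only at $p$, while $G$-equivariance across the full orbit $Gp$, smoothness, and the uniform quotient Ricci bound throughout $U^{reg}$ must all be ensured simultaneously. The openness of the strict inequalities in Lemma \ref{q:solution} permits perturbative adjustment of $\lambda_1,\lambda_2$, and the $\rho$-invariance of $\cal H_1,\cal H_2$ guarantees that the radial profiles $f_i$ can be chosen $G_p$-equivariantly, but the verification that all conditions close up into a genuinely smooth $G$-invariant metric requires some care.
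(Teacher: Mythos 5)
You follow the same route as the paper (Lemma \ref{q:solution} applied to $\cal H_1$ and $\cal H_2=(\mathrm{span}\{X\}\oplus\cal H_1)^\perp\cap\cal H_p$, then a doubly warped $\rho(G_p)$-invariant metric on the slice), but your verification of condition (1) has a genuine gap. The bound $\Ricci_{U^{reg}/G}\geq 1$ must hold in \emph{every} quotient direction, whereas the only inequality you check, $\lambda_1\tr(p_1|_{\cal W})+\lambda_2\tr(p_2|_{\cal W})\geq 1$, is — via Schur's Lemma and \eqref{eq:riccisubspace} — the Ricci curvature of the limiting horizontal spaces \emph{in the direction of the fixed axis $X$} only. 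Nothing in your construction controls the (quotient) Ricci curvature in directions of $\cal H_1$ and, above all, of $\cal H_2$: for $W\in\cal H_2$ the plane $(X,W)$ has curvature $\lambda_2<0$, so positivity of $\Ricci^{\cal W}(W)$ has to be extracted from the sectional curvatures $K(V,W)$ and $K(W,W')$, and these are \emph{not} determined by the $2$-jet data $f_i(0),f_i''(0)$ you prescribe (they involve $f_1'f_2'/f_1f_2$, $(f_2'/f_2)^2$, etc.). This is exactly where the paper's proof does its real work: it fixes $\phi=\lambda_1^{-1/2}\sin(\sqrt{\lambda_1}\,t)$ and $\psi=(-\lambda_2)^{-1/2}\exp(\sqrt{-\lambda_2}\,t-b)$, places $p$ at $t_0$ so that the mixed curvature $-\sqrt{-\lambda_1\lambda_2}\cot(\sqrt{\lambda_1}\,t_0)$ satisfies \eqref{eq:in1}, chooses $b>\sqrt{-\lambda_2}\,t_0$, and then uses the trace hypothesis a \emph{second} time, $\tr(p_1|_{\cal W})\geq\frac{(l-1)\dim\cal H_1}{\dim\cal H_p-1}$, to get $\Ricci_{\bar g}^{\cal W}(W)\geq K(X,W)+\tr(p_1|_{\cal W})\bigl(-\sqrt{-\lambda_1\lambda_2}\cot(\sqrt{\lambda_1}\,t_0)\bigr)\|W\|^2\geq\lambda_1\|W\|^2$. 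Without some such global choice of warping functions and of the base point, ``$R_X$ has the right eigenvalues at $p$'' does not imply $\Ricci^{\cal W}\geq c>0$ in all directions, and condition (1) can simply fail for your metric.

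Two smaller remarks. First, your step from $\lim_{t\to\infty}\Ricci_{g_t}(X)=\lambda_1A+\lambda_2B<0$ to ``$\Ricci_{g_t}(X)<0$ for every $t$ after suitable rescaling of the finite-$t$ contributions'' is not an argument: for finite $t$, formula \eqref{eq:riccicurvature} still contains the vertical terms $\frac{1}{1+t\lambda_i}\kappa_0(e_i,X)$ and $z_t(C_t^{1/2}e_i,X)$ with $e_i$ vertical, and a constant rescaling of $g$ only reparametrizes the Cheeger family, it does not suppress them; one must control them through the explicit slice geometry (the paper is terse here too, but the explicit metric makes this accessible). Second, the difficulty you flag at the end — smoothness and $G$-equivariance of the warped ansatz — is not where the problem lies: as you note, the ansatz is automatically $\rho(G_p)$-invariant and extends to $G\times_{G_p}\cal H_p$ with a biinvariant $Q$; the missing ingredient is the curvature verification for condition (1) in all directions.
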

  \begin{proof}
    Since \eqref{eq:trace.to.Ricci} is assumed, a direct application of Lemma \ref{q:solution} gives $\lambda_1,\lambda_2$ such that $R_X=\lambda_1 p_1+\lambda_2 p_2$ satisfies $1$ and $2$. Now, classical theory should give a $G$-invariant metric with such prescribed curvature and this would complete the proof: In Riemannian normal coordinates (using the exponential map as chart), the Taylor expansion of the metric at $0$ has identity as constant part, vanishing linear part (Christoffel's symbols vanish at $0$), and the quadratic part is the curvature. Nevertheless, it is worth pointing it out that a double warped product provides an explicit metric.
     
    Consider a biinvariant metric $Q$ on $G$ and recall that there is a $G$-invariant neighborhood of $q$ equivariantly diffeomorphic to $G\times_{\rho}\cal H_q$. It suffices to define a $\rho(G_q)$-invariant metric on $\cal H_q$ such that $R_X=\lambda_1 p_1+\lambda_2p_2$ and $\Ricci^\cal W\geq c$ for all $\cal W\in\widetilde{\cal W}_q$ for some $c>0$. 
     
    Write $\cal H_q \cong \mathbb{R}\times \mathbb{R}^{n_1}\times \mathbb{R}^{n_2}$ where $\bb R$ corresponds to the subspace spanned by the fixed axis and $\bb R^{n_1},\bb R^{n_2}$ correspond to the two $\rho(G_q)$-invariant subspaces $\cal H_1,\cal H_2$, respectively. 
    Now consider the metric:
    \begin{equation}\label{eq:bar.metric}
      \bar g = dt^2 + \phi^2\left(t\right)ds^2_{\mathbb{R}^{n_1}} + \psi^2\left(t\right)ds^2_{\mathbb{R}^{n_2}},
    \end{equation}
    where $ds^2_{\mathbb{R}^{n_i}}$ is the standard flat metric of $\mathbb{R}^{n_i}$ and  
    \begin{align}
      \label{eq:phi}    \phi(t) &=\textstyle  \frac{1}{\sqrt{\lambda_1}}\sin\left(\sqrt{\lambda_1}t\right),\\
      \label{eq:psi}    \psi(t) &= \textstyle  \frac{1}{\sqrt{-\lambda_2}}\exp\left(\sqrt{-\lambda_2}t -b\right), 
    \end{align}
    for some $b>0$ to be fixed later.
    Following the notation of \cite[p. 71]{p}, given $V,V' \in T\mathbb{R}^{n_1},~ W,W'\in T\mathbb{R}^{n_2}$ we have
    \begin{align}
      \label{eq:1}  R_{\bar g}(X,V) &= \lambda_1 X\wedge V,\\
      R_{\bar g}(X,W) &= \lambda_2 X\wedge W,\\
      \label{eq:3}  R_{\bar g}(V,V') &= \lambda_1 V\wedge V',\\
      R_{\bar g}(W,W') &= -\lambda_2\left(\exp (b-\sqrt{-\lambda_2}t)^2 -1\right) W\wedge W',\\
      \label{eq:5}  R_{\bar g}(V,W) &= -\sqrt{-\lambda_1\lambda_2}\cot(\sqrt{\lambda_1}t ) V\wedge W,
    \end{align}
    where $X\wedge Y(v)=g({Y,v})X-g({X,v})Y$. 
    In particular, $R_X=\lambda_1p_1+\lambda_2p_2$. To verify that $\Ricci_{\bar g}^\cal W\geq c$ at $q$, 
    identify $\bb R\times \bb R^{n_1}\times \bb R^{n_2}$ with $\cal H_q$ so that $q$ is the point $(t_0,0,0)$, where 
    \begin{equation}\label{eq:in1}
      -\sqrt{-\lambda_1\lambda_2}\cot(\sqrt{\lambda_1}t_0)\geq\max\left\{\lambda_1, (1-\lambda_2)\frac{\dim\cal H_q-1}{(l-1)\dim \cal H_1}\right\}.
    \end{equation}
    Choose $b>\sqrt{-\lambda_2}t_0$. Using \eqref{eq:1}-\eqref{eq:5} we conclude that
    \begin{equation*}
      \Ricci^\cal W(\alpha X+V+W)=\alpha^2\Ricci^\cal W(X)+\Ricci^\cal W(V)+\Ricci^\cal W(W),
    \end{equation*}
    for all $\alpha\in \bb R$. Using $\lambda_1,\lambda_2$ given in Lemma \ref{q:solution}, we guarantee that $\Ricci_{\bar g}^\cal W(X)\geq 1$ and $\Ricci_{\bar g}^\cal W(V)\geq \lambda_1\|V\|^2$ (this follows from equations \eqref{eq:1},\eqref{eq:3} and \eqref{eq:5}). Finally. $\Ricci_{\bar g}^\cal W(W)$ satisfies
    \begin{gather*}
      \Ricci_{\bar g}^\cal W(W)\geq K(X,W)+\tr(p_1|_{\cal W})(-\sqrt{-\lambda_1\lambda_2}\cot(\sqrt{\lambda_1}t_0))\|W\|^2\geq \lambda_1\|W\|^2,
    \end{gather*}
    where the last inequality follows from \eqref{eq:in1} since $\tr(p_1|_{\cal W})\geq \frac{(l-1)\dim \cal H_1}{\dim\cal H_q-1}$. A suitable rescaling of \eqref{eq:bar.metric} completes the proof.
  \end{proof}
   
 We now use the explicit local description given in Proposition \ref{prop:trace.to.Ricci} to prove Theorem \ref{ithm:mainitro}, by providing the following family of examples:
   
  \subsubsection{A family of counterexamples}
  \label{sec:examples}

    Here we prove Theorem \ref{ithm:mainitro}. To do so, we present a family of examples of manifolds satisfying the hypotheses of Theorem \ref{ithm:searleintro}, but that do not develop positive Ricci curvature after any Cheeger deformation. The main idea consists of combining the algebraic description given by the solution of Problem \ref{claim:1} with the local construction obtained in Proposition \ref{prop:trace.to.Ricci}. Our model example consists of a doubly warped metric on the sphere $\bb S^5\subset \bb R^6$ with the usual mono-axial $SO(3)$-action on it. 
     
    Consider $\bb S^5 \subset \mathbb{R}^6$ endowed with the standard linear $SO(3)$-action given by the inclusion $SO(3)\ni A\mapsto \mathrm{diag}(1,1,1,A)\in SO(6)$.
    Take $q = (1,0,0,0,0,0)$ and $X=(0,1,0,0,0,0)$.
    Note that $G_{q} = G_X= SO(3).$
    Moreover, the regular orbits are diffeomorphic to 2-spheres. Therefore, the dimension $l$ of the horizontal space at points in a regular orbit is $3$.
     
    Write $\cal H_q=\mathrm{span}\{X\}\oplus\cal H_1\oplus\cal H_2$ where $\cal H_1$ is the $\rho$-invariant subspace spanned by $\{(0,0,1,0,0,0)\}$ and $\cal H_2$ the $\rho$-invariant space spanned by the last three coordinates. In this way, $A=1$ and $B=3$ in Lemma \ref{q:solution}. Let us show that $\tr(p_1|_{\cal W})>\frac{1}{2}$ for all $\cal W\in\widetilde{\cal W}_q$:
     
    Given $Y\in \cal H_1\oplus \cal H_2$ one has that $d\rho(\lie{so}(3))Y\subset \cal H_2$. Therefore, $(0,0,1,0,0,0)\in (d\rho(\lie{so}(3))Y)^\bot$ for every $Y$. Using Lemma \ref{lem:regularvector} we conclude that $\tr(p_1|_\cal W)=1$ for all $\cal W$. Moreover, Lemma \ref{q:solution} guarantees that there are $\lambda_1,\lambda_2$ such that if $g$ satisfies $R_X|_{\cal H_i}=\lambda_i\id$ then $\Ricci_{g_t}(X)<0$ for all sufficiently large $t$. To construct a global explicit Riemannian metric with such a prescribed $R_X$ we consider the doubly warped metric
    \begin{equation}\label{eq:bar.metric.S5}
      g = dt^2 + \phi^2\left(t\right)ds^2_{\bb {S}^{2}} + \psi^2\left(t\right)ds^2_{\bb S^{2}}
    \end{equation}    
    where $t\in\Big(\frac{\pi}{2\sqrt{\lambda_1}},\frac{\pi}{\sqrt{\lambda_1}}\Big)$; $\phi$ is as in \eqref{eq:phi}; and $\psi(t)=\frac{1}{\sqrt{-\lambda_2}}\sinh(\sqrt{-\lambda_2}t)$ for $t\in\Big(\frac{\pi}{2\sqrt{\lambda_1}},\frac{\pi}{2\sqrt{\lambda_1}}+\epsilon\Big)$ smoothly extended so that \eqref{eq:bar.metric.S5} defines a smooth metric on $\bb S^5$. Here, the first $ds^2_{\bb S^2}$ appearing in equation \eqref{eq:bar.metric.S5} corresponds to the first three coordinates in $\bb S^5$ and the second to the last three coordinates on it. The fixed points correspond to $t=\pi/2\sqrt{\lambda_1}$, which by the continuity of the metric are such that $R_X=\lambda_1p_1+\lambda_2p_2$. Moreover, the action is polar and transitive in the second $\bb S^2$. Thus, its quotient space is a disc with the warped metric
    \[\tilde g= \textstyle dt^2+\frac{1}{\lambda_1}\sin(\sqrt{\lambda_1}t)^2ds_{\bb S^2}^2, \]
    which has positive constant curvature.
     
    The sphere $\bb S^5$ with (a rescaling of) \eqref{eq:bar.metric.S5} is our first global example of a manifold satisfying the hypotheses of Searle--Wilhelm's theorem that does not develop positive Ricci curvature after any Cheeger deformation.

    In higher dimensions, consider $\bb S^n$ with the standard mono-axial $SO(n-2)$-action fixing the first three coordinates. 
    Take $q=(1,0,0,...,0)$, $X=(0,1,0,...,0)$, $\cal H_1=\mathrm{span}\{(0,0,1,0,...,0)\}$ and $\cal H_2$ as the $\cal H_1$-orthogonal complement (in the standard metric in $\bb R^{n+1}$). Note that regular orbits have dimension $n-3$. Similarly to the previous case one proves that
    \[\tr(p_1|_{\cal W})=1>\frac{(l-1)\dim \cal H_1}{\dim \cal H_{p}-1} = \frac{2}{n-1}\]
    for every $\cal W\in\widetilde{\cal W}_q$. The analogous doubly warped metric
    has all the desired properties.

  \subsection{The algebraic characterization in geometric terms}
  \label{sec:recognizingeometric}
  
  On the one hand, we have decoupled geometric data from Ricci tensors on fixed axes to algebraic terms (Problem \ref{claim:1}) to accomplish the proof of Theorem A. On the other hand, it is natural to try to understand if such algebraic conditions could be re-translated on other geometric obstructions. With this aim, to provide a complete description of the ``failure'' or not of Cheeger deformations for providing positive Ricci curvature, we proceed proving Theorem \ref{thm:technical} below.
   
  \begin{theorem}\label{thm:technical}
    Let $(M,g)$ be a compact Riemannian manifold with an effective isometric action by a compact Lie group $G$ satisfying the hypotheses $(SW1)$-$(SW3)$ in Theorem \ref{ithm:searleintro}. Suppose that for every $t > 0$ there exists a unit vector $\overline X \in T_pM$ such that $\Ricci_{g_t}(\overline X) < 0$. Let $q\in M\setminus M^{reg}$ such that $X\in \cal H_q$ is a non-zero vector fixed by the isotropy representation at $q$, which when restricted to $\{X\}^{\perp}\cap \cal H_q$ is reducible. 
    
    If $\cal H_q\cap \mathrm{span}\{X\}^{\perp}$ has exactly $m$ $\rho$-irreducible summands, namely $\cal H_q\cap \mathrm{span}\{X\}^{\perp}=\cal H_1+\cal H_2+ \dots + \cal H_m$, then (up to changing the order of the summands), for every regular vector $Y = Y_1 + Y_2 + \dots + Y_m \in \cal H_1+\cal H_2+\dots + \cal H_m$ there exist $j_0\neq i_0 \in \{1,\dots,m\}$ such that
    \[\dim \cal H_{j_0} - \dim \rho(G_q^0)Y_{j_0} > \dfrac{(l-1)\dim \sum_{j\neq i_0}\cal H_j}{\dim \cal H_q-1},\]
    where $l$ is the codimension of a principal orbit.
  \end{theorem}

  Before proving Theorem \ref{thm:technical}, we briefly describe the corresponding analogue of Problem \ref{claim:1} in the case of an arbitrary number of $\rho$-invariant subspaces. 
   
  Let $q\in M\setminus M^{reg}$ and $X\in \cal H_q$ be a fixed axis. If there exists a decomposition of $\cal H_q\cap {\mathrm{span}\{X\}}^{\perp}$ into $m$ $\rho$-invariant subspaces $\cal H\cap {\mathrm{span}\{X\}}^{\perp}= \cal H_{1} \oplus \cal H_{2} \oplus \dots \oplus \cal H_{m}$, a similar formulation to the existence of a metric satisfying \eqref{eq:a} and \eqref{eq:b} can be given. 
  \begin{problem}
    \label{claim:12}
    Let $\cal A \subset \mathbb{R}^m$ be a given collection of $m$-tuples of real numbers satisfying $a_1 +a_2+\dots+a_m = l-1, ~\forall (a_1,\dots,a_m) \in \cal A$. Find $\lambda_1,\lambda_2,\dots, \lambda_m \in \mathbb{R}$ such that
    \begin{equation}\label{eq:positivona2}\sum_{i=1}^n a_i\lambda_i \geq 1,~ \forall (a_1,\dots,a_m) \in \cal A\end{equation}
    and
    \begin{equation}\label{eq:negativona2} \sum_{i=1}^n \lambda_i\dim \cal H_i < 0.\end{equation}
  \end{problem}

  Since the geometric data plays no role in the proof of Lemma \ref{q:solution}, we can solve Problem \ref{claim:12} in a similar way:
   
  \begin{lemma}\label{lem:natinhosolved} Let $A_1,\dots, A_m$ be positive real numbers corresponding to $\dim \cal H_1,\ldots, \dim \cal H_m$. Then Problem \ref{claim:12} has an affirmative answer if, and only if, there exist $i_0\neq j_0 \in \{1,\dots, m\}$ such that 
    \begin{equation}\label{eq:solution>02}
      \inf_{(a_1,\dots ,a_m)\in\cal A} \{ a_{j_0}\} > \frac{(l-1)\sum_{j\neq i_0} A_j}{\sum_{k=1}^m A_k},
    \end{equation}
  \end{lemma}
  \begin{proof} If Problem \ref{claim:12} has an affirmative answer with solution $\lambda_1,\dots,\lambda_n$, define $\lambda_{i_0}=\max_{i} \{\lambda_i\}$ and $\lambda_{j_0}=\min_{j} \{\lambda_{j}\}$. Inequality \eqref{eq:solution>02} then follows from Lemma \ref{q:solution} applied to the problem
    \begin{align} \Big( \sum_{i \neq j_0} a_i \Big) \lambda_{i_0} + a_{j_0}\lambda_{j_0} &\geq 1,\\
      A_{i_0}\lambda_{i_0}+\Big(\sum_{j\neq i_0} A_j \Big)\lambda_{j_0}& < 0,\end{align}
    where $(a_1,\dots,a_n) \in \tilde{\cal A}=\left\{(\sum_{i \neq j_0} a_i, a_{j_0}) : (a_1,\dots,a_m) \in \cal A\right\} \subset \mathbb{R}^2$.
     
    The converse is a straightforward consequence of Lemma \ref{q:solution} since we can choose $i_0\neq j_0$ arbitrarily and set $\lambda=\lambda_j=\lambda_i$ for all $j,i\neq j_0$.
  \end{proof}

   We finally pass to the proof of Theorem \ref{thm:technical} by recognizing $\inf\{\tr (p_1|_{\cal W})\}$ as an invariant quantity associated to the $G$-action. To this aim, fix a $\rho$-invariant inner product on $\cal H_q$ and let $\cal R$ be the regular part of $\cal H_q$ with respect to $\rho$ (recall Definition \ref{def:regularvectors}). Then:
   
  \begin{proposition}\label{propn} Suppose $d\rho(\lie g_q)X=0$ and $\cal H_q=\mathrm{span}\{X\}\oplus\cal H_1\oplus\cal H_2$ is a $\rho$-invariant decomposition. For every $Y\in \cal R$, write $Y_i$ as its $\cal H_i$-component. Then
    \begin{gather*}
      \inf_{\cal W\in\widetilde{\cal W}_q}\{\tr(p_1|_{\cal W}) \}=\inf_{Y\in\cal R}\{\dim \cal H_1 - \dim \rho(G^0_q)Y_1\}. 
    \end{gather*}
  \end{proposition}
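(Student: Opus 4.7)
My plan is to establish the two inequalities between the infima separately. The $\geq$ direction is elementary and follows from a trace inequality for orthogonal projections, while the $\leq$ direction requires constructing a sequence $\cal W^\varepsilon\in\tilde{\cal W}_p$ along which the trace attains the conjectured infimum in the limit.

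First I would use the orthogonal decomposition $\cal H_p = \cal W\oplus d\rho(\lie g_p)Y$ (noted in the remark of Section 4) to rewrite $\tr(p_1|_\cal W) = \dim \cal H_1 - \tr(p_1|_{d\rho(\lie g_p)Y})$. Then, for any subspace $V\subset \cal H_p$, expanding in an orthonormal basis $\{f_j\}$ of $\cal H_1$ gives $\tr(p_1|_V) = \sum_j\|p_V f_j\|^2$, and since $p_V f_j = 0$ whenever $f_j\perp p_1(V)$, one has the elementary bound $\tr(p_1|_V)\leq \dim p_1(V)$. Applied to $V = d\rho(\lie g_p)Y$, whose image under $p_1$ is $d\rho(\lie g_p)Y_1$ of dimension $\dim \rho(G_p)Y_1$, this yields $\tr(p_1|_\cal W)\geq \dim\cal H_1 - \dim \rho(G_p)Y_1$ for every $Y\in \cal R$, giving the $\geq$ direction after infimizing.

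For the reverse inequality I would, given $Y\in \cal R$, introduce the family $Y^\varepsilon := Y_1 + \varepsilon Y_2$ and show $\tr(p_1|_{\cal W^\varepsilon})\to \dim \cal H_1 - \dim \rho(G_p)Y_1$ as $\varepsilon\to 0^+$, where $\cal W^\varepsilon = (d\rho(\lie g_p)Y^\varepsilon)^\bot$. Since $\rho$ preserves $\cal H_1$ and $\cal H_2$, the stabilizer satisfies $\lie g_{Y^\varepsilon}= \lie g_{Y_1}\cap \lie g_{Y_2}=\lie g_Y$ for all $\varepsilon\neq 0$, so $Y^\varepsilon$ has the principal stabilizer of $Y$ throughout and lies in $\cal R$. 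Decomposing $\lie g_p = \lie g_Y\oplus \lie a\oplus \lie b$ with $\lie a$ a $Q$-complement of $\lie g_Y$ in $\lie g_{Y_1}$ and $\lie b$ a $Q$-complement of $\lie g_{Y_1}$ in $\lie g_p$, the subspace $d\rho(\lie g_p)Y^\varepsilon$ is spanned by $\{d\rho(B)Y_1+\varepsilon d\rho(B)Y_2\}_{B\in \lie b}$ together with $\{\varepsilon\, d\rho(A)Y_2\}_{A\in\lie a}$. After rescaling the $\lie a$-family by $\varepsilon^{-1}$, both families depend smoothly on $\varepsilon$ down to $0$; the injectivity of $d\rho(\cdot)Y_1$ on $\lie b$ and of $d\rho(\cdot)Y_2$ on $\lie a$ (the latter because $\lie a\cap\lie g_{Y_2}\subset \lie g_Y\cap \lie a = 0$) keeps the rank constant, so in the Grassmannian $d\rho(\lie g_p)Y^\varepsilon$ limits to the split subspace $d\rho(\lie g_p)Y_1\oplus d\rho(\lie a)Y_2\subset \cal H_1\oplus \cal H_2$. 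Continuity of $V\mapsto \tr(p_1|_V)$ then yields $\tr(p_1|_{d\rho(\lie g_p)Y^\varepsilon})\to \dim d\rho(\lie g_p)Y_1 = \dim \rho(G_p)Y_1$, closing the reverse inequality.

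The main obstacle will be the dimension bookkeeping along the degeneration $\varepsilon\to 0^+$: the naive spanning vectors of $d\rho(\lie g_p)Y^\varepsilon$ indexed by $\lie a$ all tend to zero, so without separating the $\lie a$ and $\lie b$ contributions and rescaling the former by $\varepsilon^{-1}$, the Grassmannian limit would either drop dimension or fail to split across $\cal H_1\oplus \cal H_2$, and the trace computation would break down. The decomposition $\lie g_p = \lie g_Y\oplus \lie a\oplus \lie b$ together with the two injectivity statements is exactly what secures a well-defined limit of the correct dimension; once this is in place, the remaining ingredients — the elementary trace bound, continuity of the trace on the Grassmannian, and passage to infima — are formal.
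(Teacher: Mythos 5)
Your proof is correct, and although it degenerates along the same one–parameter family as the paper (your $Y^\varepsilon=Y_1+\varepsilon Y_2$ is, up to scale, the paper's $\alpha Y_1+Y_2$ with $\alpha=\varepsilon^{-1}$), the way you extract the two inequalities is genuinely different. The paper splits $\cal W^\alpha=\cal W_1\oplus\cal W_2\oplus\cal W_{12}^\alpha$ and proves the quantitative pinching Lemma \ref{lem:inequality} (via the auxiliary operator $T$), which gives the two–sided bounds \eqref{eq:monotonicity} and hence the lower bound and the limit as $\alpha\to\infty$ in one stroke. You instead pass to the orthogonal complement and write $\tr(p_1|_{\cal W})=\dim\cal H_1-\tr(p_1|_{d\rho(\lie g_p)Y})$: the lower bound becomes the one–line projection inequality $\tr(p_1|_V)\le\dim p_1(V)$ applied with $p_1(d\rho(\lie g_p)Y)=d\rho(\lie g_p)Y_1$, and the upper bound is a soft Grassmannian argument in which the splitting $\lie g_p=\lie g_Y\oplus\lie a\oplus\lie b$ and the $\varepsilon^{-1}$–rescaling of the $\lie a$–directions keep the spanning frame independent, so that $d\rho(\lie g_p)Y^\varepsilon\to d\rho(\lie g_p)Y_1\oplus d\rho(\lie a)Y_2$ and continuity of $V\mapsto\tr(p_1 p_V)$ finishes. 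The paper's route buys explicit monotone-in-$\alpha$ estimates; yours is more elementary, decouples the two inequalities cleanly, and has the small advantage that your lower bound holds for an arbitrary (not necessarily $\rho$-regular) $Y$, which is exactly what is needed since Lemma \ref{lem:regularvector} only produces some $Y\in\cal H_p$ for a given $\cal W\in\tilde{\cal W}_p$. Your check that $\lie g_{Y^\varepsilon}=\lie g_{Y_1}\cap\lie g_{Y_2}=\lie g_Y$ is a point the paper leaves tacit; the one tacit step you share with the paper is the converse of Lemma \ref{lem:regularvector}, namely that a $\rho$-regular vector $Z$ indeed yields $(d\rho(\lie g_p)Z)^\perp\in\tilde{\cal W}_p$ (via the geodesic $s\mapsto\exp_p(sZ)$), which is what places your $\cal W^\varepsilon$ inside $\tilde{\cal W}_p$.
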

  \begin{proof}
    Let us fix $Y=Y_1+Y_2\in \cal R$ and, for every $\alpha>0$, define $\cal W^{\alpha} {=} \left(d\rho(\lie g_q)(\alpha Y_1 + Y_2)\right)^{\perp}$. It suffices to show that
    \begin{equation}
      \inf_{\alpha>0}\{\tr(p_1|_{\cal W^\alpha}) \}=\dim \cal H_1 - \dim \rho(G_q)Y_1. 
    \end{equation}
     
    Write $\cal W^\alpha = \cal W_1\oplus \cal W_2\oplus \cal W_{12}^\alpha$ 
    where 
    \begin{align}\label{eq:w1} \cal W_1 &:= \ker ~p_2\cap \cal W^\alpha,\\
      \label{eq:w2}\cal W_2& := \ker ~p_1\cap \cal W^\alpha,\\
      \label{eq:w12}\cal W_{12}^\alpha &:= \cal W^\alpha \cap \left(\cal W_1 \oplus \cal W_2\right)^\bot,\end{align}
    where $p_i$ is the $\cal H_i$-projection. Note that $\cal W_1,\cal W_2$ do not depend on $\alpha$, since $\cal W_i=\{Z_i\in\cal H_i~|~Z_i\perp d\rho(\lie g_q) Y_i \}$. Observe also that the dimension of $\cal W^{\alpha}_{12}$ does not depend on $\alpha$. Moreover:
     
    \begin{lemma}\label{lem:inequality}
      There are constants $c,C>0$, not depending on $\alpha$, such that 
      \begin{equation*}
        \frac{1}{{1+c\alpha^2}} \leq \frac{\|p_1(v_\alpha)\|^2}{\|v_\alpha\|^2}\leq \frac{1}{{1+C\alpha^2}}
      \end{equation*}
      for every $v_\alpha\in\cal W_{12}^\alpha$.
    \end{lemma}
     
    \begin{proof} Let $\cal M=d\rho(\lie g_q)Y_1\oplus d\rho(\lie g_q)Y_2$ and note that $\cal W_{12}^\alpha\in \cal M$. Let $\cal M_{12}^\alpha$ be the orthogonal complement of $\cal W_{12}^\alpha$ in $\cal M$ and note that both $p_1|_{\cal M_{12}^\alpha},p_2|_{\cal M_{12}^\alpha}$ are isomorphisms onto their images: an element in $\ker p_1|_{\cal M_{12}^\alpha}$ lies in $\cal H_2$ and is orthogonal to both $\cal W_2$ and $\cal W_{12}^\alpha$, however $p_2(\cal W_2+\cal W_{12}^\alpha)=\cal H_2$ for every $\alpha>0$. Analogously, $\ker p_2|_{\cal M_{12}^\alpha}=\{0\}$. Moreover, given $Z_1+Z_2\in \cal M_{12}^1$, we have $\alpha Z_1+Z_2\in\cal M_{12}^\alpha$. In particular, there is an invertible linear map $T:p_1(\cal M)\to p_2(\cal M)$ such that
      \[ Z\in \cal M_{12}^\alpha \Longleftrightarrow Z= \alpha Z_1+T(Z_1),~\text{for some } Z_1\in p_1(\cal M).\]
      We conclude that:
      \begin{align*}
        W\in \cal W_{12}^\alpha &\Longleftrightarrow W=W_1-\alpha (T^{*})^{-1}W_1,~\text{for some } W_1\in p_1(\cal M).
      \end{align*} 
      Using that $\|T\|^{-1}\leq \|T^{-1}\|$ and $\|T^*\|=\|T\|$, we have
      \begin{equation*}
        \|W_1\|^2+\alpha^2\|T\|^{-2}\|W_1\|^2\leq \|W\|^2\leq \|W_1\|^2+\alpha^2\|(T^{-1})\|^2\|W_1\|^2.\qedhere
      \end{equation*}
    \end{proof}

    Now we estimate $\tr (p_1|_{\cal W^{\alpha}}).$ Since $\cal W_2\subset \ker p_1$, we take orthonormal bases $\{e_1,...,e_{d_1}\}$ and $\{e_1^{\alpha},...,e_{d}^\alpha\}$ for $\cal W_1$ and $\cal W_{12}^{\alpha}$, respectively, and consider:
    \[\tr(p_1|_{\cal W^{\alpha}}) = \sum_j^{d_{1}} \langle p_1e_j,e_j\rangle + \sum_k^{d} \langle p_1e_k^{\alpha},e_k^{\alpha}\rangle = \dim \cal W_1 + \sum_k^{d} \|p_1e_k^{\alpha}\|^2.\]
    Lemma \ref{lem:inequality} gives:
    \begin{equation}\label{eq:monotonicity}\dim \cal W_1 + \frac{1}{1+c\alpha}\dim \cal W_{12}^{\alpha}\leq \mathrm{tr}(p_1|_{\cal W^{\alpha}}) \leq \dim \cal W_1 + \frac{1}{1+C\alpha}\dim \cal W_{12}^{\alpha}.\end{equation}
    By taking $\alpha\to \infty$ we conclude that $\inf_\alpha\{\tr(p_1|_{\cal W^\alpha})\}=\dim \cal W_1$. On the other hand, $ \cal H_1=\cal W_1+d\rho(\lie g_q)Y_1$. So it follows that $ \dim \cal W_1 = \dim \cal H_1-\dim \rho(G_q)Y_1$, completing the proof.\end{proof}
  The proof of Proposition \ref{propn} can be adapted when $\cal H_q\cap {\mathrm{span}\{X\}}^{\perp}$ is decomposed into $m$ $\rho$-invariant components:
   
  \begin{proposition}\label{prop:renatinhocaracterizou} Suppose that $d\rho(\lie g_q)X=0$ and that $\cal H_q\cap {\mathrm{span}\{X\}}^{\perp}=\mathrm{span}\{X\}\oplus\cal H_1\oplus\cal H_2 \oplus \dots \oplus \cal H_m$ is a $\rho$-invariant decomposition. For every $Y\in \cal R$, write $Y_i$ as its $\cal H_i$-component. Then there exists $j_0 \in \{1,2,\dots,m\}$ for which
    \begin{gather*}
      \inf_{\cal W\in\widetilde{\cal W}_q}\{\tr(p_{j_0}|_{\cal W}) \}=\inf_{Y\in\cal R}\{\dim \cal H_{j_0} - \dim \rho(G^0_q)Y_{j_0}\}. 
    \end{gather*}
  \end{proposition}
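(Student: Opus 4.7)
My plan is to reduce the $n$-summand statement to the two-summand case already handled in Proposition \ref{propn}, via a regrouping of the summands. Fix any index $j_0\in\{1,\ldots,n\}$ and form the $\rho$-invariant orthogonal decomposition
\[\cal H_p = \mathrm{span}\{X\}\oplus \cal H_{j_0}\oplus \tilde{\cal H}_{j_0},\qquad \tilde{\cal H}_{j_0} := \bigoplus_{j\neq j_0}\cal H_j.\]
Since $\tilde{\cal H}_{j_0}$ is $\rho$-invariant (as a sum of $\rho$-invariant subspaces) and is orthogonal to $\cal H_{j_0}$, this places us in the hypothesis of Proposition \ref{propn} with $\cal H_1 := \cal H_{j_0}$ and $\cal H_2 := \tilde{\cal H}_{j_0}$.

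The first step is to verify that the proof of Proposition \ref{propn} uses only the pairwise orthogonality and joint $\rho$-invariance of its two summands, and nowhere requires irreducibility. Indeed, the scaling family $\cal W^{\alpha}=(d\rho(\lie g_p)(\alpha Y_1+Y_2))^{\perp}$, its splitting $\cal W_1\oplus\cal W_2\oplus\cal W_{12}^{\alpha}$, and the crucial projection estimate of Lemma \ref{lem:inequality} depend solely on those two features of $\cal H_1$ and $\cal H_2$. Consequently the argument transports verbatim with $\cal H_{j_0}$ in the role of $\cal H_1$, $\tilde{\cal H}_{j_0}$ in the role of $\cal H_2$, and $p_{j_0}$ in place of $p_1$, yielding
\[\inf_{\cal W\in\tilde{\cal W}_p}\{\tr(p_{j_0}|_\cal W)\} \;=\; \inf_{Y\in\cal R}\{\dim \cal H_{j_0}-\dim \rho(G_p)Y_{j_0}\}.\]
Here I use that the $\cal H_{j_0}$-component of any $Y\in\cal R$ is unambiguous (it agrees under either decomposition, by orthogonality of the summands) and that the regular set $\cal R$ depends only on the representation $\rho$, not on the chosen partition into invariant summands.

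Since the identity above holds for \emph{every} $j_0\in\{1,\ldots,n\}$, the existence assertion of the proposition follows \emph{a fortiori}. The only real obstacle is the bookkeeping check that Lemma \ref{lem:inequality} truly survives the regrouping; but this is immediate once one observes that its proof rests on nothing more than the orthogonality of the two invariant summands and the invertibility of the induced projection maps on the auxiliary subspace $\cal M_{12}^{\alpha}$, both of which are preserved when $\cal H_2$ is replaced by the coarser invariant subspace $\tilde{\cal H}_{j_0}$.
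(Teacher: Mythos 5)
Your proposal is correct and follows essentially the same route as the paper: the paper's own proof also regroups the summands into $\cal H_{j_0}$ versus $\bigoplus_{j\neq j_0}\cal H_j$ (via $\cal W_1=(\bigcap_{i\neq j_0}\ker p_i)\cap\cal W^\alpha$ and $\cal W_2=\ker p_{j_0}\cap\cal W^\alpha$) and reruns the two-summand computation, noting that Lemma \ref{lem:inequality} survives, exactly as you argue. The only difference is cosmetic: you invoke Proposition \ref{propn} as a black box (legitimately, since it is stated for arbitrary $\rho$-invariant summands) and obtain the identity for every $j_0$, while the paper re-derives it for the particular $j_0$ singled out by Lemma \ref{q:solution}, which is the index needed in the application; your stronger conclusion implies the stated existence claim a fortiori.
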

   
  \begin{proof} Following the proof of Proposition \ref{propn}, fix $Y=Y_1+\dots+Y_m \in \cal R$. For every $\alpha>0$, define $\cal W^{\alpha} {=} \left(d\rho(\lie g_q)(\alpha Y_1 + Y_2+\dots+Y_m)\right)^{\perp}$, where $Y_i \in \cal H_{i}$.
     
    Take $j_0$ given by Lemma \ref{q:solution} and let $\cal W^\alpha = \cal W_1\oplus \cal W_2\oplus \cal W_{12}^\alpha,$
    where 
    \begin{align}\label{eq:w11} \cal W_1 &:= (\bigcap_{i \neq j_0} \ker ~p_i ) \cap \cal W^\alpha,\\
      \label{eq:w22}\cal W_2& := \ker ~p_{j_0}\cap \cal W^\alpha,\\
      \label{eq:w122}\cal W_{12}^\alpha &:= \cal W^\alpha \cap \left(\cal W_1 \oplus \cal W_2\right)^\bot,\end{align}
    where $p_i$ is the $\cal H_{i}$ projection. As in Proposition \ref{propn}, $\cal W_1,\cal W_2$ and the dimension of $\cal W_{12}^\alpha$ do not depend on $\alpha$. Moreover, Lemma \ref{lem:inequality} remains valid in this case and we can estimate $\tr(p_{j_0}|_{\cal W^{\alpha}})$ as well.
     
    The same calculation performed in Proposition \ref{propn} yields \[\inf_\alpha\{\tr(p_{j_0}|_{\cal W^\alpha})\}=\dim \cal W_1.\] Since these dimensions are finite, it is clear that $\cal W_{1} {=} \cal H_{j_0} \cap \left(d\rho(\lie g_q) Y_{j_0}\right)^\perp $, which completes the proof.
  \end{proof}
   
Now Theorem \ref{thm:technical}
    follows by combining Theorem \ref{thm:sufficient}, Propositions \ref{propn} and \ref{prop:renatinhocaracterizou} and Lemma \ref{lem:natinhosolved}.
   
  \section*{Acknowledgments}
  The authors thank the anonymous referee for criticism, comments, and suggestions, which substantially improved the exposition. They also thank M. Alexandrino, L. Gomes, M. Mazatto, and D. Fadel for helpful discussions on very early versions of this paper. The third-named author expresses gratitude to C. Searle and F. Wilhelm for bringing this problem to his attention, as well as the hospitality of Universität zu Köln. This work was supported by Fundação de Amparo à Pesquisa do Estado de São Paulo [2017/24680-1 to L.C., 2017/10892-7 and 2017/19657-0 to L.S.]; Coordenação de Aperfeiçoamento de Pessoal de Nível Superior [88882.329041/2019-01 to R.S and PROEX to L.C]; Conselho Nacional de Pesquisa [404266/2016-9 to L.S.] and the SNSF-Project 200020E\_193062 and the DFG-Priority programme SPP 2026 to L.C.
   
\bibliographystyle{alpha}
\bibliography{mainreserva}
   
\end{document}